\documentclass[12pt]{amsart}
\usepackage{amssymb}
\usepackage[T1]{fontenc}
\textwidth=6 true in
\hoffset=-0.5 true in
\textheight=9 true in
\voffset=-0.4 true in
\usepackage{graphicx}

\usepackage{stmaryrd} 
\usepackage{dsfont} 
\usepackage{mathtools} 
\usepackage{enumitem}
\usepackage{graphicx}
\usepackage{tikz-cd}

\theoremstyle{plain}
\newtheorem{theorem}{Theorem}[section]
\newtheorem{lemma}[theorem]{Lemma}
\newtheorem{proposition}[theorem]{Proposition}
\newtheorem{corollary}[theorem]{Corollary}
\theoremstyle{definition}
\newtheorem{definition}[theorem]{Definition}

\newtheorem{example}[theorem]{Example}

\numberwithin{equation}{section}

\renewcommand{\epsilon}{\varepsilon}
\renewcommand{\phi}{\varphi}
\newcommand{\A}{\mathcal{A}}
\newcommand{\F}{\mathcal{F}}
\newcommand{\M}{\mathcal{M}}
\newcommand{\Mna}{\mathcal{M}_{\text{n.a.}}}
\newcommand{\X}{\mathcal{X}}
\newcommand{\I}{\mathcal{I}}
\renewcommand{\P}{\mathcal{P}}
\DeclareMathOperator{\Acc}{Acc}
\DeclareMathOperator{\len}{len}
\newcommand{\Leb}{\lambda}
\newcommand{\hfair}{h_{\textnormal{fair}}}

\begin{document}

\title{Fair Measures for Countable-to-one Maps}
\author{Ana Rodrigues}
\address{Department of Mathematics\\University of Exeter\\Exeter EX4 4QF, UK}
\email{A.Rodrigues@exeter.ac.uk}
\author{Samuel Roth}
\address{Mathematical Institute of the Silesian University in Opava\\Na Rybni\v{c}ku 1\\74601 Opava, Czech Republic}
\email{samuel.roth@math.slu.cz}
\author{Zuzana Roth}
\address{Mathematical Institute of the Silesian University in Opava\\Na Rybni\v{c}ku 1\\74601 Opava, Czech Republic}
\email{zuzana.roth@math.slu.cz}
\subjclass[2010]{Primary: 37E05, 37A35}
\keywords{Entropy, Markov shift, interval map, fair measure, tame graph.}
\thanks{Research was supported by SGS 16/2016 and by RVO funding for I\v{C}47813059.}
\begin{abstract}

In this paper we generalize the recently introduced concept of fair measure (M.~Misiurewicz and A.~Rodrigues, Counting preimages. \emph{Ergod. Th. \& Dynam. Sys.}
\textbf{38} (2018), no. 5, 1837 -- 1856). We study transitive countable state Markov shift maps and extend our results to a particular class of interval maps, Markov and mixing interval maps. Finally, we move beyond the interval and look for fair measures for graph maps.

\end{abstract}
\maketitle

\section{Introduction}

Computing the topological entropy of a one-dimensional dynamical system is in general a very difficult task. Motivated by this problem, in \cite{MR}, the entropy was computed  following backward trajectories in a way that at each step every preimage can be chosen with equal probability introducing a new concept of entropy, the fair entropy. Fair entropy gives a lower bound for topological entropy and is simple to compute.

As in \cite{MR} let us denote by $c(x)$ the cardinality of the set $f^{-1}(x)$. We start with a point $x_0$ and proceed by induction. Given $x_n$, we choose $x_{n+1}$ from the set $f^{-1}(x_n)$ randomly, that is, the probability of choosing any of these points is $1/c(x_n)$. Then we go to the limit with the geometric averages of $c(x_0), c(x_1), \ldots, c(x_n)$ as $n$ goes to infinity. 

Also in \cite{MR}, convergence of the geometric averages of $c(x_0), c(x_1), \ldots, c(x_n)$ as  $n$ goes to infinity for a random choice of the backward trajectory as well as convergence of the measures equidistributed along longer and longer pieces of a random backward trajectory were investigated. Indeed, these questions were answered for some special classes of maps, namely, transitive subshifts of finite type and  piecewise monotone (with a finite number of pieces) topologically mixing interval maps.

In this paper we define fair measures in a broad setting, flexible enough to handle noncompact spaces and discontinuous maps.

Let us state our main definition. Let $(X,\F)$ be a measurable space and $f:X\to X$ a surjection. Assume that $X$ admits a countable measurable partition $\X=\{X_i\}_{i=1}^{\infty}$ so that each $f(X_i)$ is also measurable and each restriction $f|_{X_i} : X_i \to f(X_i)$ is a measurable isomorphism, i.e. images and preimages of measurable sets are again measurable. Let $\M(X,f)$ be the space of all $f$-invariant probability measures on $X$. To avoid pathologies, we impose the mild topological assumption that $X$ is a Polish space and $\F$ is the Borel $\sigma$-algebra. Then for each $\mu\in\M(X,f)$, the measure-theoretic completion of $(X,\F,\mu)$ is a standard probability space (Lebesgue space). 

We define another countable measurable partition $\A$ as the common refinement of the partitions $\left\{f(X_i),X\setminus f(X_i)\right\}$, $i=1,2,\ldots$. Thus, if a set $B$ is a subset of an element of $\A$ (in symbols, $B\prec\A$), then we know which branches of $f^{-1}$ are defined on $B$. We write $p(B)=\{i; B\subset f(X_i)\}$ to identify those branches and $c(B)=\# p(B)$ to count how many there are. A singleton $\{x\}$ is always contained in an element of $\A$ and we write simply $c(x)$ for the number of preimages of $x$. This number is always positive since $f$ is surjective, but may be infinite.

Now we define a fair measure as a special kind of invariant measure for which the measure of a set is divided equally among the pieces of its preimage.

\begin{definition}\label{defnFair} An invariant measure $\mu\in\M(X,f)$ is called \emph{fair} if each measurable set $B\prec \mathcal{A}$ satisfies
\begin{equation}\label{fair}
\mu(X_i\cap f^{-1}(B)) = \frac{\mu(B)}{c(B)} \text{, for all } i\in p(B).
\end{equation}
\end{definition}
By a simple common refinement argument, the definition of a fair measure does not depend on the choice of the partition $\X$.

With our definition we are able to study the behaviour of a random backward trajectory for  transitive countable state Markov shift maps. We then extend the results from shift maps to maps on the interval. To do this, we introduce the notion of {\it isomorphism modulo countable invariant sets} and we then study Markov and mixing interval maps.

This paper is organized as follows. In Section 2 we provide some more definitions and describe the general setting. In Section 3 we discuss random backward trajectories for transitive countable state Markov shift maps and in Section 5 we introduce the main tool that will allow us to extend our results to countably Markov and mixing interval maps in Section 6. We finally introduce Lebesgue fair models in Section 7 and in Section 8  we look for fair measures on graph maps.

\section{General Case}

In this section we study some properties of fair measures. Immediately from our Definition \ref{defnFair} we get the following properties of a fair measure.

\begin{lemma}\label{lem:basic}
If $\mu$ is a fair measure, then

(a) For a measurable set $B\prec\A$ with $c(B)=\infty$ we have $\mu(B)=0$.

(b) If $B$ is measurable and $\mu(B)=0$, then $\mu(f(B))=0$.

(c) $\mu(\{x\in X ; \#f^{-n}(x)=\infty \text{ for some }n\in\mathbb{N}\})=0$.

\end{lemma}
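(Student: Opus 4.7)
For (a), my plan is to use that for $i\notin p(B)$ one has $X_i\cap f^{-1}(B)=\emptyset$ (since $\A$ refines $\{f(X_i),X\setminus f(X_i)\}$ and $B \prec \A$), so $f^{-1}(B)=\bigsqcup_{i\in p(B)}(X_i\cap f^{-1}(B))$. Invariance then gives
\[
\mu(B)=\mu(f^{-1}(B))=\sum_{i\in p(B)}\mu(X_i\cap f^{-1}(B)),
\]
and by fairness the summands share a common value $a\ge 0$, so $\mu(B)=c(B)\cdot a$. When $c(B)=\infty$, finiteness $\mu(B)\le 1$ forces $a=0$ and hence $\mu(B)=0$.

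For (b), I would first reduce to the case $B\subset X_i$ for a fixed $i$ by decomposing $B=\bigcup_i(B\cap X_i)$ and using countable subadditivity on $f(B)=\bigcup_i f(B\cap X_i)$. For such a $B$ and each cell $A\in\A$, put $C:=f(B)\cap A\prec\A$. If $c(A)=\infty$, part (a) gives $\mu(C)\le\mu(A)=0$. Otherwise, if $C\neq\emptyset$, the relation $C\subset f(X_i)$ forces $A\subset f(X_i)$ (as $A\in\A$) and hence $i\in p(C)$; injectivity of $f|_{X_i}$ together with $B\subset X_i$ yields $f|_{X_i}^{-1}(C)\subset f|_{X_i}^{-1}(f(B))=B$, so (\ref{fair}) gives $\mu(C)/c(C)=\mu(f|_{X_i}^{-1}(C))\le\mu(B)=0$. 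Summing over the countably many cells $A$ yields $\mu(f(B))=0$.

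For (c), set $E_n=\{x:\#f^{-n}(x)=\infty\}$. Since $c(x)$ depends only on which cell of $\A$ contains $x$, the set $E_1$ is a countable union of cells with $c=\infty$, and (a) gives $\mu(E_1)=0$. For $n\ge 2$, the decomposition $f^{-n}(x)=\bigcup_{y\in f^{-(n-1)}(x)}f^{-1}(y)$ shows that $x\in E_n\setminus E_{n-1}$ implies $y\in E_1$ for some $y\in f^{-(n-1)}(x)$, so $x\in f^{n-1}(E_1)$. Inductively $E_n\subset\bigcup_{k=0}^{n-1}f^k(E_1)$; measurability of each $f^k(E_1)$ follows from iterating the measurable-isomorphism property of the branches, and iterating (b) gives $\mu(f^k(E_1))=0$. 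A countable union in $n$ finishes the proof.

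I expect the main obstacle to be the bookkeeping in (b): without the initial reduction to $B\subset X_i$, a different branch could map extra material into the cell $C$ and break the key containment $f|_{X_i}^{-1}(C)\subset B$ on which the measure estimate rests.
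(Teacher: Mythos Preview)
Your proof is correct and follows essentially the same approach as the paper's: invariance plus fairness for (a), reduction to a single branch and then the fairness identity for (b), and writing the set in (c) as a countable union of forward images of cells $A\in\A$ with $c(A)=\infty$. The only cosmetic difference is that in (b) the paper reduces simultaneously to $B\prec\X$ and $f(B)\prec\A$ (so that fairness applies directly to $f(B)$), whereas you reduce only to $B\subset X_i$ and then decompose $f(B)$ cell-by-cell over $\A$; these are the same argument organized slightly differently.
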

\begin{proof}
For $B\prec\A$ we use invariance of the measure to write $\mu(B)=\sum \mu(X_i\cap f^{-1}(B))$ where the sum extends over all $i\in p(B)$. By~\eqref{fair} each summand is zero. This proves (a).

By cutting up a given measurable set $B$ into pieces, we may assume that $B\prec\X$ and $f(B)\prec\A$. If $B\subset X_i$, then $B = X_i\cap f^{-1}(f(B))$. Then by~\eqref{fair}, $\mu(f(B))=c(f(B))\cdot\mu(B)$, which is zero if $c(f(B))$ is finite. But if $c(f(B))$ is infinite, then $\mu(f(B))=0$ by (a). This completes the proof of (b).

To prove (c) note that this set can be written as the union of the sets $f^n(A)$ where $A\in\A$, $c(A)=\infty$, and $n\geq0$. By (a) if $c(A)=\infty$, then $\mu(A)=0$. Now the result follows from (b).
\end{proof}

Thus for maps like the Gauss map, where every point has countably many preimages, we have no hope of finding a fair measure. In the search for fair measures, we must focus our attention on the part of the space where the cardinalities of preimage sets are finite.

In~\cite{MR} if a system $(X,f)$ has a unique fair measure, then the measure-theoretic entropy of that measure is referred to as the \emph{fair entropy} of the system. For systems with more than one fair measure, we generalize that definition as follows:

\begin{definition}\label{def:fairentropy}
The \emph{fair entropy} of a system $(X,f)$ is the supremum of measure-theoretic entropies of its fair measures, $\hfair(f)=\sup\left\{h_{\mu}(f)~|~\mu\in\M(X,f)\text{ is fair}\right\}$.
\end{definition}

It may also happen that a system has no fair measures, as in Examples~\ref{ex:null} and~\ref{ex:transient} below. In this case, we take the supremum over the empty set in Definition~\ref{def:fairentropy} to be zero.

%

Next we record two properties of fair measures which were proved in a more restrictive setting in~\cite{MR}. But the proofs need no modification; the arguments involved are purely measure-theoretic and do not require compactness of $X$, continuity of $f$, or finiteness of $\X$.

\begin{lemma}[\cite{MR}]\label{lem:Jac} A measure $\mu\in\M(X,f)$ is fair if and only if its measure-theoretic Jacobian is $x\mapsto c(f(x))$.
\end{lemma}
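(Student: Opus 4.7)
The plan is to translate the defining identity \eqref{fair} directly into the Jacobian identity $\mu(f(A))=\int_A c(f(x))\,d\mu$, which must hold on every measurable set $A$ on which $f$ is injective. The partition $\A$ was designed precisely so that $c$ is constant on each of its elements (equal to $c(B)$ for $B\prec\A$), and $\X$ was designed so that $f$ is injective on each $X_i$. So both sides of the Jacobian identity can be computed piecewise, and the equivalence becomes essentially bookkeeping.

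For the forward direction, assume $\mu$ is fair. Let $A\subset X_i$ be measurable; since $\A$ is a countable partition of $X$, I would first decompose $A$ as a countable disjoint union $A=\bigsqcup_k A_k$ where $f(A_k)\subset C_k$ for some $C_k\in\A$. Replacing $A$ by one of the $A_k$, I may assume $f(A)$ is contained in a single element $C\in\A$ with $i\in p(C)$. Then $B:=f(A)\prec\A$ and $A=X_i\cap f^{-1}(B)$, so by \eqref{fair},
\[
\mu(A)=\frac{\mu(f(A))}{c(f(A))}.
\]
Since $c\circ f$ is the constant $c(f(A))$ on $A$, this rearranges to $\mu(f(A))=\int_A c(f(x))\,d\mu$. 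Summing over the $A_k$ gives the Jacobian identity on a general measurable $A\subset X_i$, and taking $i$ arbitrary covers all sets of injectivity. Hence $c\circ f$ is the Jacobian of $\mu$.

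For the reverse direction, assume $x\mapsto c(f(x))$ is the Jacobian of $\mu$. Fix $B\prec\A$ and $i\in p(B)$, and set $A=X_i\cap f^{-1}(B)$. Because $f|_{X_i}$ is a measurable isomorphism and $B\subset f(X_i)$, the map $f$ is injective on $A$ with $f(A)=B$. The Jacobian identity then gives
\[
\mu(B)=\mu(f(A))=\int_A c(f(x))\,d\mu=c(B)\,\mu(A),
\]
using again that $c\circ f\equiv c(B)$ on $A$. This is exactly \eqref{fair}, so $\mu$ is fair.

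The only mild obstacle is the need, in the forward direction, to cut the given injectivity set $A$ along preimages of elements of $\A$ so that $c\circ f$ is constant on each piece; this is just a countable refinement and uses $\sigma$-additivity of $\mu$. No compactness, continuity, or finiteness of $\X$ enters, which is why (as the authors note) the argument of \cite{MR} carries over verbatim.
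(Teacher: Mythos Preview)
Your proof is correct and is precisely the kind of argument the paper has in mind. Note that the paper does not actually supply a proof of this lemma: it simply cites \cite{MR} and remarks that ``the proofs need no modification; the arguments involved are purely measure-theoretic and do not require compactness of $X$, continuity of $f$, or finiteness of $\X$.'' What you have written is a faithful expansion of that cited argument in the present, more general, setting---the whole point is exactly the bookkeeping you describe, using that $c\circ f$ is constant on the pieces of $\X\vee f^{-1}\A$.

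Two minor remarks. First, in the sentence ``taking $i$ arbitrary covers all sets of injectivity,'' you are implicitly using that for a general measurable $A$ on which $f$ is injective, the images $f(A\cap X_i)$ are pairwise disjoint, so the identity on each $A\cap X_i$ sums to the identity on $A$; this is true but worth stating. Second, in the reverse direction the line $\mu(B)=c(B)\,\mu(A)$ becomes formally $0\cdot\infty$ when $c(B)=\infty$; here the hypothesis that the Jacobian is the (finite-valued) function $c\circ f$ forces $\mu(A)=0$, whence $\mu(B)=\int_A c\circ f\,d\mu=0$ and \eqref{fair} holds trivially. These are cosmetic points; the substance of your argument is exactly right.
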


\begin{lemma}[\cite{MR}]\label{lem:erg-dec}If $\mu$ is a fair measure, then so is almost every component of its ergodic decomposition.
\end{lemma}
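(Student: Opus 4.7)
The plan is to recast fairness as the vanishing of a conditional expectation over the $\sigma$-algebra $\I$ of $f$-invariant sets, and then to read off fairness of the ergodic components from the disintegration.

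The first step I would take is to extend fairness to \emph{restrictions to invariant sets}. Given a strictly $f$-invariant $C$ (that is, $f^{-1}(C)=C$) and a measurable $B\prec\A$ with $i\in p(B)$, the set $C\cap B$ is again $\prec\A$, satisfies $c(C\cap B)=c(B)$, and $f$-invariance of $C$ yields the identity $X_i\cap C\cap f^{-1}(B)=X_i\cap f^{-1}(C\cap B)$. Applying \eqref{fair} to $C\cap B$ gives
\[
\mu\bigl(X_i\cap C\cap f^{-1}(B)\bigr)=\frac{\mu(C\cap B)}{c(B)}.
\]
The passage from strict invariance to invariance modulo $\mu$-null sets is routine (replace $C$ by $\liminf_n f^{-n}(C)$).

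Next, for each fixed pair $(i,B)$ with $i\in p(B)$, set $h_{i,B}=\mathds{1}_{X_i\cap f^{-1}(B)}-\tfrac{1}{c(B)}\mathds{1}_{B}$. The displayed identity says $\int_C h_{i,B}\,d\mu=0$ for every $C\in\I$, that is, $E_\mu[h_{i,B}\mid\I]=0$ $\mu$-a.e. Since the ergodic decomposition $\mu=\int\mu_y\,d\mu(y)$ is the disintegration of $\mu$ over $\I$---so that $E_\mu[g\mid\I](y)=\int g\,d\mu_y$ $\mu$-a.e.\ for every bounded measurable $g$---this translates into the fairness equation \eqref{fair} for $\mu_y$ at the single pair $(i,B)$, valid for $\mu$-a.e.\ $y$.

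The remaining task is to obtain \eqref{fair} simultaneously for all $(i,B)$. Since $X$ is Polish and $\A$ is countable, every atom $A_0\in\A$ is standard Borel and admits a countable $\pi$-system (for instance, a countable base of its relative topology, together with $A_0$ itself) that generates its Borel $\sigma$-algebra. Intersecting the $\mu$-conull ``good'' sets over the resulting countable collection of pairs $(i,B)$ produces a single $\mu$-conull set $\Omega$ on which \eqref{fair} holds for every $B$ in the generator and every $i\in p(B)$. For fixed $i$ and fixed $A_0\ni B$, both sides of \eqref{fair} are finite measures in $B$ of equal total mass, so a routine Dynkin-class argument promotes the identity to every measurable $B\prec\A$. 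Hence $\mu_y$ is fair for every $y\in\Omega$, as required.

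The main obstacle I anticipate is the first step: recognising that fairness is preserved by restriction-and-renormalisation to any $f$-invariant subset, so that \eqref{fair} encodes the vanishing of a conditional expectation. Once this observation is in place, the remainder is standard ergodic-decomposition bookkeeping together with a countable-generator argument powered by the Polish hypothesis.
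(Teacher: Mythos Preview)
The paper does not supply its own proof; it simply records that the argument from \cite{MR} carries over verbatim to the present, more general setting. Your proposal is a correct, self-contained argument.

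By way of comparison: the paper places this lemma immediately after Lemma~\ref{lem:Jac}, and the route suggested by that pairing is to show that almost every ergodic component $\mu_y$ inherits the Jacobian $c\circ f$ of $\mu$, then invoke Lemma~\ref{lem:Jac}. You instead work directly from the defining identity~\eqref{fair}, recasting it as the vanishing of $E_\mu\bigl[\mathds{1}_{X_i\cap f^{-1}(B)}-\tfrac{1}{c(B)}\mathds{1}_B \,\big|\, \I\bigr]$, reading off~\eqref{fair} for $\mu_y$ from the disintegration over the invariant $\sigma$-algebra, and then promoting from a countable generator to all measurable $B\prec\A$ via a $\pi$-$\lambda$ argument. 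The Jacobian route is shorter once Lemma~\ref{lem:Jac} is available; your route is more elementary in that it avoids the Jacobian machinery and the associated nonsingularity considerations entirely. One minor correction: a countable base for the relative topology of $A_0$ is not automatically a $\pi$-system, so you should close it under finite intersections (the result is still countable) before applying the $\pi$-$\lambda$ theorem.
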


The \emph{Jacobian} of $\mu\in\M(X,f)$ referred to in Lemma~\ref{lem:Jac} is the measurable function $J:X\to[0,\infty)$, unique up to changes on sets of $\mu$-measure zero, such that for every Borel set $B\subset X$,
\begin{equation}\label{Jac}
\text{If } f|_B \text{ is injective, then } \mu(f(B))=\int_B J \, d\mu.
\end{equation}

In particular, for the Jacobian to exist, the measure $\mu$ must be \emph{non-singular}, i.e. every measure-zero set must have a measure zero image. Fair measures always fulfill this condition -- Lemma~\ref{lem:basic}~(b) -- and their Jacobians always exist, see~\cite[Proposition 9.7.2]{VO}.

\section{Random Backward Trajectories}

The motivation for studying fair measures is to understand what happens along a random backward trajectory $y_0 \mapsfrom y_1 \mapsfrom y_2 \mapsfrom \cdots$ of a given point $y_0\in X$, where the backward trajectory is chosen as follows: given $y_i$, we choose $y_{i+1}$ from $f^{-1}(y_i)$ by ``rolling a dice'' with $c(y_i)$ sides. Thus, at each stage of the process, the choice of the next preimage is ``fair.'' We hope that by distributing point masses along longer and longer pieces of this backward trajectory and passing to a weak-* limit
we can generate a fair measure. And we hope that we can calculate the entropy of this measure by taking geometric averages of the function $c$ along longer and longer pieces of the orbit.

One way to formalize what we mean by a random backward trajectory of a point $y_0$ is as follows.

\begin{definition}\label{def:rbt} Let $y_0\in X$ satisfy $\# f^{-n}(y_0)<\infty$ for all $n\geq0$. Consider the Markov sequence of random variables $Y_0, Y_1, \ldots$ with
\begin{itemize}
\item initial distribution $P[Y_0=y_0]=1$, and
\item transition probabilities $P[Y_{i+1}=y_{i+1} \, | \, Y_i=y_i] = 1/c(y_i)$ for each $y_{i+1}\in f^{-1}(y_i)$.
\end{itemize}
A property is said to hold for a \emph{random choice of the backward trajectory} of $y_0$ if the property holds for almost every outcome $(y_i)_{i=0}^\infty$ of this Markov chain.
\end{definition}

Note that we cannot discuss random backward trajectories of points $y_0$ for which $\# f^{-n}(y_0)=\infty$ for some $n\geq0$. When forced to choose among infinitely many preimages, there is no fair way to do it. In light of Lemma~\ref{lem:basic}~(c) this does not bother us too much.

Next, we state clearly the meaning of weak-* convergence, remembering that our space $X$ need not be compact. Let $\mu_n, \mu$ be Borel probability measures on $X$. We say that $\mu$ is the \emph{weak-* limit} of the measures $\mu_n$ and write $\mu_n \xrightarrow{weak-*} \mu$ if one (all) of the following equivalent conditions is satisfied (see \cite[Proposition 2.7]{DGS}):

\vspace{0.1in}

(a)  $\int_X \phi \, d\mu_n \to \int_X \phi \, d\mu$ for all bounded (!) continuous functions $\phi:X\to\mathbb{R}$.

(b) $\limsup \mu_n(C) \leq \mu(C)$ for every closed subset $C\subset X$.

(c) $\liminf \mu_n(U) \geq \mu(U)$ for every open subset $U\subset X$.

(d) $\lim \mu_n(B) = \mu(B)$ for every subset $B\subset X$ whose boundary has measure $\mu(\partial B)=0$.

\vspace{0.1in}

Finally, we say that a sequence of points $y_n\in X$ \emph{equidistributes} for the measure $\mu$ if $\mu$ is the weak-* limit of the measures $\frac{1}{N}\sum_{n=0}^{N-1} \delta_{y_n}$.

Just as ergodic invariant measures can be used to understand the behavior of typical forward trajectories of a system, so also ergodic fair measures give us information about typical backward trajectories. This is the meaning of the following theorem -- it is a straightforward adaptation of~\cite[Theorem 3.4]{MR}.

\begin{theorem}[\cite{MR}]\label{th:back}
Let $\mu$ be an ergodic fair measure. Then:

(a) For each integrable function $\phi:X\to\mathbb{R}$, for $\mu$-almost every $y_0\in X$, for a random choice $(y_n)$ of the backward trajectory of $y_0$, $\frac{1}{N}\sum_{n=0}^{N-1} \phi(y_n) \to \int_X \phi\, d\mu$.

(b) If $X$ is compact, then for $\mu$-almost every $y_0\in X$, a random choice $(y_n)$ of the backward trajectory of $y_0$ equidistributes for the measure $\mu$.

(c) For $\mu$-almost every $y_0\in X$, for a random choice $(y_n)$ of the backward trajectory of $y_0$, the geometric averages $\sqrt[n]{c(y_0) \cdot c(y_1) \cdot \ldots \cdot c(y_n)}$ converge as $n\to\infty$ to the (possibly infinite) number $\exp\left(\int\log c\, d\mu\right)$.

(d)  If additionally $f$ has a one-sided generator of finite entropy, then the limit in~(c) is the exponential of the entropy of $\mu$.

\end{theorem}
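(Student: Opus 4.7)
The plan is to reduce each conclusion to an application of the Birkhoff ergodic theorem on the space of backward trajectories, regarded as an invertible natural extension of $(X,f,\mu)$. Let $\hat{X} = \{(y_n)_{n\geq 0} : f(y_{n+1}) = y_n \text{ and } c(y_n) < \infty \text{ for all } n\}$, endowed with the product Borel $\sigma$-algebra, the backward shift $T(y_0,y_1,\ldots) = (y_1,y_2,\ldots)$, and its inverse $\sigma(y_0,y_1,\ldots) = (f(y_0),y_0,y_1,\ldots)$. By the Ionescu--Tulcea theorem, the initial law $\mu$ on the coordinate $y_0$ together with the transition kernels $P[Y_{n+1}\in\cdot \mid Y_n = y_n] = \frac{1}{c(y_n)}\sum_{x\in f^{-1}(y_n)} \delta_x$ determine a unique probability measure $\hat\mu$ on $\hat X$; Lemma~\ref{lem:basic}(c) makes the restriction to $\{c(y_n)<\infty\text{ for all }n\}$ a $\mu$-null issue, hence harmless.

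A direct computation, using the fair property on pieces $B \prec \A$ contained in each $X_i$, shows that the law of $Y_1$ under $\hat\mu$ is again $\mu$; the Markov property then propagates $T$-invariance of $\hat\mu$ to all finite-dimensional cylinders and hence to the whole $\sigma$-algebra. Since $\sigma = T^{-1}$ and $\pi_0 \circ \sigma = f\circ\pi_0$ (where $\pi_0$ is the zeroth-coordinate projection), the triple $(\hat X,\sigma,\hat\mu)$ is the natural extension of $(X,f,\mu)$; in particular, ergodicity of $\mu$ lifts to ergodicity of $\hat\mu$ under $T$.

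With this machinery in place, (a) is Birkhoff's theorem applied to $\phi\circ\pi_0\in L^1(\hat\mu)$, followed by disintegration of $\hat\mu$ over $\pi_0$, whose fibre measures are precisely the laws of Definition~\ref{def:rbt}. Statement (c) is (a) applied to $\log c$, with a monotone truncation $\min(\log c,k)\uparrow\log c$ to cover the case $\int\log c\,d\mu=\infty$, after which exponentiation converts arithmetic averages into geometric ones. Statement (b) follows from (a) by choosing a countable dense family $\{\phi_k\}\subset C(X)$ (available because compact metric spaces have separable $C(X)$), running (a) simultaneously for each $\phi_k$ on a common full-measure set, and upgrading to weak-* convergence for every bounded continuous test function via an $\varepsilon/3$ approximation. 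For (d), Lemma~\ref{lem:Jac} identifies the Jacobian as $J_\mu=c\circ f$, and a one-sided generator of finite entropy is exactly the hypothesis that validates Rokhlin's entropy formula $h_\mu(f)=\int \log J_\mu\,d\mu$; by $f$-invariance this integral equals $\int\log c\,d\mu$, matching the limit from (c).

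The main obstacle I anticipate is verifying rigorously that $(\hat X,\sigma,\hat\mu)$ deserves the name ``natural extension'' and that ergodicity truly transfers --- one must check that the forward iterates under $\sigma$ of the zeroth-coordinate pullback $\sigma$-algebra generate the entire Borel $\sigma$-algebra of $\hat X$ modulo $\hat\mu$-null sets, together with a clean disintegration over $\pi_0$; the fair condition is the only nontrivial ingredient there. The integrability fine print in (c) and the invocation of Rokhlin's formula in (d) are then routine bookkeeping.
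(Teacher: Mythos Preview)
Your proposal is correct and follows essentially the same route as the paper: build the natural extension (you do so explicitly via Ionescu--Tulcea and the fair transition kernels, while the paper simply invokes it), apply Birkhoff to $\tilde f^{-1}$ for (a), pass to a countable dense set in $C(X)$ for (b), truncate $\log c$ for (c), and invoke Rokhlin's formula plus invariance for (d). The only difference is that you spell out the construction and the ergodicity transfer that the paper defers to~\cite{MR}; your identification of these as the nontrivial points is accurate.
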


\begin{proof}[Sketch of proof]

(a) Form the natural extension $(\tilde{X}, \tilde{f}, \tilde{\mu})$ of $(X,f,\mu)$. Then apply Birkhoff's ergodic theorem using $\tilde{f}^{-1}$. For more details, see~\cite[Theorem 3.4]{MR}.

(b) Let $C(X)$ be the space of continuous real-valued functions on $X$ with the topology of uniform convergence. By compactness of $X$, $C(X)$ contains a countable dense subset $\{\phi_i\}_{i=1}^\infty$, see~\cite{PU}. Applying (a) to each $\phi_i$, we find a full-measure set of points $y_0$ such that $\frac1N \sum_{n=0}^{N-1} \phi_i(y_n) \to \int_X \phi_i \, d\mu$ for a random backward trajectory of $y_0$. If we choose $y_0$ from the intersection of these countably many full-measure sets, then a random backward trajectory will equidistribute for the measure $\mu$.

(c) If $\phi(x)=\log(c(x))$ is integrable, then apply~(a) directly. Otherwise, approximate $\phi$ from below by truncations $\phi_m(x)=\min(\phi(x),m)$ and apply~(a) anyway.

(d) This is Rohlin's entropy formula $h_\mu(f)=\int \log J \, d\mu$ for systems with a one-sided generator \cite[Theorem 2.9.7]{PU}, together with the observation that $\int \log c\circ f\, d\mu = \int \log c \, d\mu$ by the invariance of $\mu$.
\end{proof}

Theorem~\ref{th:back} gives us only partial information about random backward trajectories. One problem is that we do not know if there are any fair measures to apply it to. Another problem is that the results hold only almost everywhere, which may mean almost nowhere with respect to some other natural measure. The situation is much better, however, for countable state Markov shifts.

\section{Countable State Markov Shifts}\label{sec:cms}

Let $\I$ be a countable set of indices (states) and $M=(m_{ij})_{i,j\in \I}$ a 0-1 matrix. Consider the corresponding one-sided Markov shift $(\Sigma_M,\sigma)$. We assume transitivity of $\sigma$, which is the same as irreducibility of $M$. The number of preimages $c(x)$ of a point $x\in\Sigma_M$ depends only on the cylinder of length 1 to which it belongs, and if it is the $j$th cylinder, then it is equal to
\begin{equation}\label{cj}
c_j=\sum_{i} m_{ij}.
\end{equation}

If one of the column sums $c_j$ is infinite, then by Lemma~\ref{lem:basic} (c) every fair measure assigns the value zero to the set $\bigcup_{n=0}^\infty \sigma^n([j])$. But by transitivity of our Markov shift, this union is the whole space $\Sigma_M$. We may conclude that there are no fair measures. 

Assume from now on that all the column sums $c_j$ are finite. One natural way to look for a fair measure is to use stochastic Markov chains. Form a matrix $Q$ with entries
\begin{equation}\label{Q}
q_{ji}=\frac{m_{ij}}{c_j}.
\end{equation}
It is a nonnegative matrix with rows summing to 1 and nonzero entries in the same positions as the transpose matrix $M^T$. So we can ask whether the time-reversed Markov shift $\Sigma_{M^T}$ supports a shift-invariant Markov measure with transition probabilities $Q$. What is needed is a vector $\pi$ satisfying 
\begin{equation}\label{pi}
\pi Q=\pi,\qquad \text{with each }\pi_i\geq0 \text{ and }\sum_{i\in\I} \pi_i=1,
\end{equation}
to serve as the initial distribution. The theory of stochastic Markov chains tells us that there is at most one such vector $\pi$, its entries are necessarily strictly positive, and it exists if and only if $Q$ is positive recurrent~\cite{Fel}. In this case, we can use $\pi$ to construct a stochastic matrix $P$ with entries given by
\begin{equation}\label{P}
\pi_i p_{ij} = \pi_j q_{ji}.
\end{equation}
Summing~\eqref{P} over $j$ we get that $\pi P = \pi$, so that the measure $\mu=\text{Markov}(\pi, P)$ is shift-invariant. $\mu$ defines a measure on $\Sigma_M$ because $P, M$ have their nonzero entries in the same positions. To show that $\mu$ is fair, it suffices to check equation~\eqref{fair} on cylinder sets. Let $B=[j j_1 j_2 \cdots j_n]$ be a cylinder set. We are using $\X=\{[i]; i\in \I\}$. Then $p(B)=\{i; m_{ij}=1\}$ and $c(B)=c_j$. For each $i\in p(B)$ we have $[i]\cap \sigma^{-1}(B) = [i j j_1 \cdots j_n]$. The measure of this set is
$\pi_i p_{ij} p_{jj_1} \cdots p_{j_{n-1}j_n}$, which by~\eqref{P} equals $q_{ji}\pi_j p_{jj_1} \cdots p_{j_{n-1}j_n}$, which by~\eqref{Q} equals $\frac{1}{c(B)}\mu(B)$. This shows that $\mu$ is fair.

\begin{theorem}\label{th:cms}
Let $(\Sigma_M,\sigma)$ be a transitive countable-state Markov shift with all $c_j$ finite. Given any point $y_0\in\Sigma_M$ the behavior of a random backward trajectory $(y_n)$ is as follows:

(a) If $Q$ is positive recurrent, then $(y_n)$ equidistributes for the fair measure $\mu=\textnormal{Markov}(\pi, P)$.

(b) If $Q$ is null recurrent, then $(y_n)$ is dense in $\Sigma_M$, but visits each cylinder set $[i]$ with limiting frequency zero.

(c) If $Q$ is transient, then $(y_n)$ visits each cylinder set $[i]$ only finitely often.

\end{theorem}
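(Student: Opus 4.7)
The plan is to reduce everything to the classical recurrence/transience dichotomy for an auxiliary Markov chain on $\I$ whose transitions encode the one-step backward choices of the shift. Writing the backward trajectory as $y_n=(i_n,i_{n-1},\ldots,i_1,y_0)$, the sequence $(i_n)_{n\ge1}$ of successively prepended symbols is, by Definition~\ref{def:rbt}, a Markov chain on $\I$ starting at $i_0$ equal to the first symbol of $y_0$ and having transition matrix $Q$ from~\eqref{Q}. Transitivity of $\sigma$ is precisely irreducibility of $Q$. The essential observation is that for $n>m$, the event $\{y_n\in[k_0k_1\cdots k_m]\}$ is exactly $\{i_n=k_0,\,i_{n-1}=k_1,\ldots,i_{n-m}=k_m\}$. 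Thus every cylinder-level statement about $(y_n)$ becomes a finite-window statement about $(i_n)$.

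With this reduction in hand, each case follows from a standard Markov-chain input. In case (a), the ergodic theorem for irreducible positive recurrent Markov chains gives, for each admissible word $k_0\cdots k_m$, almost sure convergence of its empirical window-frequency to $\pi_{k_m}q_{k_m k_{m-1}}\cdots q_{k_1 k_0}$; a short telescoping using~\eqref{P} rewrites this as $\pi_{k_0}p_{k_0k_1}\cdots p_{k_{m-1}k_m}=\mu([k_0\cdots k_m])$. In case (b), null recurrence provides $\tfrac1N\sum_{n<N}\mathbf{1}_{\{i_n=i\}}\to 0$ almost surely for every state $i$, which is the frequency claim; the density claim follows by combining recurrence and irreducibility of $Q$ with the strong Markov property to show that each admissible finite word, having positive transition weight $q_{k_m k_{m-1}}\cdots q_{k_1 k_0}$, is revisited infinitely often. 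In case (c), transience is exactly the statement that each state is visited only finitely often, which via the identification above gives finitely many visits of $(y_n)$ to $[i]$.

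The last task, specific to case (a), is to promote cylinder-wise convergence to weak-$*$ convergence of the empirical measures on the possibly noncompact space $\Sigma_M$. Here I would use that the collection of admissible cylinders is countable, so one intersects the corresponding full-probability events to obtain a single event on which $\tfrac1N\sum_{n<N}\delta_{y_n}(B)\to\mu(B)$ for every cylinder $B$ simultaneously. Since cylinders are clopen with empty boundary and generate the topology, criterion~(d) for weak-$*$ convergence applies to cylinders directly, and a routine uniform approximation of bounded continuous $\phi:\Sigma_M\to\mathbb{R}$ by finite sums of indicators of disjoint cylinders upgrades this to $\int\phi\,d\mu_N\to\int\phi\,d\mu$.

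The main obstacle is not any single deep step but the bookkeeping in case (b): one must carefully invoke the strong Markov property to iterate the ``successful block'' argument and thereby pass from recurrence of $(i_n)$ at individual states to infinitely many occurrences of every admissible length-$(m+1)$ pattern. Everything else is a mechanical translation between the shift picture and the classical Markov-chain picture.
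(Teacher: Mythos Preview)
Your overall strategy coincides with the paper's: both identify the sequence of prepended symbols with a Markov chain on $\I$ with transition matrix $Q$ (the paper does this by building the auxiliary probability space $(\Sigma_{M^T},\nu_0)$ explicitly, you do it directly from Definition~\ref{def:rbt}), and both translate cylinder events for $(y_n)$ into finite-window events for that chain. Cases (b) and (c), and the cylinder-frequency computation in (a) via the telescoping $\pi_{k_m}q_{k_mk_{m-1}}\cdots q_{k_1k_0}=\pi_{k_0}p_{k_0k_1}\cdots p_{k_{m-1}k_m}$, are correct.

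The gap is in your passage from cylinder convergence to weak-$*$ convergence in (a). The assertion that bounded continuous $\phi:\Sigma_M\to\mathbb{R}$ can be uniformly approximated by finite sums of cylinder indicators is false when $\I$ is infinite: take any bounded $h:\I\to\mathbb{R}$ whose range has more than one accumulation point and set $\phi(\omega)=h(\omega_0)$; then $\phi$ is continuous, but any simple function supported on finitely many cylinders (plus a constant on the complement) has uniform error bounded below by the diameter of the remaining values of $h$. So criterion~(a) cannot be verified this way, and criterion~(d) on cylinders alone does not automatically propagate. The paper handles exactly this point by using the one-sided criterion~(c): for open $U\subset\Sigma_M$, approximate from \emph{inside} by finite unions of cylinders $E_m\uparrow U$ and use
\[
\liminf_{N}\,\mu_N(U)\ \ge\ \lim_{N}\mu_N(E_m)\ =\ \mu(E_m)\ \xrightarrow[m\to\infty]{}\ \mu(U).
\]
Once you replace your uniform-approximation step with this liminf argument, the rest of your proof goes through unchanged.
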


\begin{proof}
Write $y_0=\omega_0\omega_1\omega_2\cdots$. Let $\delta_{\omega_0}$ be the probability vector on the state space $\I$ with a $1$ in position $\omega_0$ and $0$'s elsewhere. Consider the measure $\nu_0=\text{Markov}(\delta_{\omega_0},Q)$ on the space $\Sigma_{M^T}$. In the positive recurrent case there is also a stationary probability vector $\pi$ for $Q$ and the corresponding measure $\nu=\text{Markov}(\pi,Q)$ on $\Sigma_{M^T}$.

We want to choose a backward trajectory for $y_0$ in a fair way. Any point $\omega'_0 \omega'_1 \cdots \in \Sigma_{M^T}$ with $\omega'_0=\omega_0$ can be used as a possible history. One way to think of this is that we extend our one-sided sequence $y_0$ to a two-sided sequence
\begin{equation*}
\lefteqn{\underbrace{\phantom{\cdots\, \omega'_3\, \omega'_2\, \omega'_1\, \omega'_0\,}}_{\text{taken from } \Sigma_{M_T}}} \cdots \omega'_3\, \omega'_2\, \omega'_1\, \!\overbrace{\omega'_0\, \omega_1\, \omega_2\, \omega_3\, \cdots}^{\text{initial point }y_0}.
\end{equation*}
Then the backward trajectory consists of the points $y_n = \omega'_n \omega'_{n-1} \cdots \omega'_0 \omega_1 \omega_2 \cdots$. To make this choice in a fair way, we use the Markov chain $(\Sigma_{M_T}, \nu_0)$. We define a sequence of random variables $Y_0, Y_1, \ldots$ on the probability space $(\Sigma_{M^T},\nu_0)$ by
\begin{equation}\label{recover}
Y_n(\omega'_0\omega'_1\omega'_2\cdots) = \omega'_n\omega'_{n-1}\cdots\omega'_1\omega'_0\omega_1\omega_2\cdots.
\end{equation}
It follows immediately that $P[Y_0=y_0]=1$ and
\begin{multline*}
P[Y_{n+1} = y_{n+1} ~|~ Y_n = y_n] = q_{ji}
= \frac{m_{ij}}{c_{j}}
= \begin{cases} 1/c(y_n), & \text{ if }y_{n+1}\in\sigma^{-1}(y_n) \\ 0, & \text{ otherwise} \end{cases}, \\
\text{ where } y_{n+1}\in[i],\, y_n\in[j].
\end{multline*}
In this way we recover the stochastic process $Y_0, Y_1,\ldots$ from Definition~\ref{def:rbt}. All we've done is construct one realization of an underlying probability space for this process.

Let us first prove (a). We work with cylinder sets, i.e.\ sets of the form $C=[i_0\cdots i_{m}] \subset \Sigma_M$ with $m\geq0$. The length of this cylinder is $m+1$. The \emph{reverse} of this cylinder is $\overline{C}=[i_m \cdots i_0] \subset \Sigma_{M^T}$ and is nonempty if and only if $C$ is nonempty in $\Sigma_M$. By applying~\eqref{P} several times we get $\nu(\overline{C})=\mu(C)$. We define the collection $\mathcal{E}_m$ to contain all unions of length $m+1$ cylinder sets, and the reverse of a set $B=\cup_\alpha C_\alpha \in \mathcal{E}_m$ is simply $\overline{B}=\cup_\alpha \overline{C_\alpha}$. Again, we get $\nu(\overline{B})=\mu(B)$. Moreover, the characteristic function $\mathds{1}_B$ is clearly integrable over $(\Sigma_{M^T},\nu)$. Applying Birkhoff's ergodic theorem to all these countably many characteristic functions at once, we get that
\begin{equation*}
W=\left\{\omega'=\omega'_0\omega'_1\omega'_2\cdots \in \Sigma_{M^T} ; \lim_{N\to\infty} \frac{1}{N} \sum_{n=0}^{N-1} \mathds{1}_{\overline{B}}(\sigma^n\omega')=\nu(\overline{B}) \text{ for all }B\in\mathcal{E}_m, \, m\geq1 \right\}
\end{equation*}
has full measure $\nu(W)=1$. Let $W_0$ be the intersection of $W$ with the cylinder set $[\omega_0]\subset \Sigma_{M^T}$. Since $\nu_0$ is just the (normalized) restriction of $\nu$ to $[\omega_0]$, we get also $\nu_0(W_0)=1$. Thus we can choose our random backward trajectory by choosing $\omega'\in W_0$ and setting $y_n=Y_n(\omega')$, $n=1,2,\ldots$. We need to show that this backward trajectory equidistributes for $\mu$.

Let $E$ be any open subset of $\Sigma_M$. For each $m\geq0$ let $E_m$ be the maximal element of $\mathcal{E}_m$ contained in $E$. Clearly $E_0 \subset E_1 \subset \cdots$ and since cylinder sets form a basis for the topology we have $E=\cup_m E_m$. Therefore $\mu(E)=\lim \mu(E_m)$. For $n\geq m$ we see by~\eqref{recover} that $y_n\in E_m$ if and only if $\sigma^{n-m}(\omega')\in \overline{E_m}$. Then we can calculate
\begin{multline*}
\liminf_{N\to\infty} \left(\frac{1}{N} \sum_{n=0}^{N-1} \delta_{y_n}\right)(E) \geq
\lim_{N\to\infty} \frac{1}{N} \sum_{n=m}^{N-1} \mathds{1}_{E_m}(y_n) =\\
=  \lim_{N\to\infty} \frac{1}{N}\sum_{n=0}^{N-m-1}\mathds{1}_{\overline{E_m}}(\sigma^n\omega')=\nu(\overline{E_m})=\mu(E_m).
\end{multline*}
Since this holds for arbitrary $m$, we get that the limes inferior is at least $\mu(E)$. Since $E$ was an arbitrary open set, this shows that $\frac{1}{N}\sum_{n=0}^{N-1}\delta_{y_n} \xrightarrow{weak-*} \mu$.

We now prove (b). Null recurrence of $Q$ means that our Markov chain $(\Sigma_{M^T},\nu_0)$ has the following property: with probability 1, the orbit under $\sigma$ of a randomly chosen point $\omega'=\omega'_0\omega'_1\cdots$ visits each cylinder $[i]$ infinitely often but with limiting frequency zero. By the Markov property, if there are infinitely many visits to $[i]$, then with probability $1$ each subcylinder $\overline{C}\subset [i]$ is also visited infinitely often. By the definition of $\nu_0$ we also get $\omega'_0=\omega_0$ with probability 1. Choose $\omega'$ with all of these properties and let $(y_n)=(Y_n(\omega'))$ be the corresponding backward trajectory of $y_0$. For each nonempty cylinder $C=[i_0\cdots i_m]\subset \Sigma_M$ there is $n\geq m$ with $\sigma^{n-m}(\omega')\in\overline{C}$, which gives $y_n\in C$. This shows the density in $\Sigma_M$ of our backward trajectory. Moreover, the visits of $y_n$ to each $[i]$ occur with the same limiting frequency as the visits of $\sigma^n(\omega')$ to $[i]$, and this frequency is zero.

Finally, we prove (c). Transience of $Q$ means that our Markov chain $(\Sigma_{M^T},\nu_0)$ has the following property: with probability 1, the orbit under $\sigma$ of a randomly chosen point $\omega'=\omega'_0\omega'_1\cdots$ visits each cylinder set $[i]$ only finitely many times. Proceeding as before, we see that the randomly chosen backward trajectory $(y_n)$ visits each $[i]$ only finitely many times.
\end{proof}

\begin{corollary}\label{cor:cms}
In the positive recurrent case, $\mu$ is ergodic and is the only fair measure on $\Sigma_M$. In the null recurrent and transient cases, there are no fair measures on $\Sigma_M$.
\end{corollary}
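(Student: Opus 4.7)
The plan is to pit the universal statement of Theorem~\ref{th:cms} (which describes a random backward trajectory of any $y_0 \in \Sigma_M$) against the $\mu'$-almost-everywhere statement of Theorem~\ref{th:back}(a) applied to an arbitrary ergodic fair measure $\mu'$. By Lemma~\ref{lem:erg-dec}, every fair measure decomposes into ergodic fair components, so without loss of generality I work with ergodic $\mu'$ throughout.

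For uniqueness in the positive recurrent case, I would first apply Theorem~\ref{th:back}(a) to each characteristic function $\mathds{1}_C$, where $C$ ranges over the countably many cylinder sets of $\Sigma_M$. Each $\mathds{1}_C$ is bounded and hence $\mu'$-integrable, so intersecting the countably many full-measure sets I obtain a $\mu'$-full-measure set $A$ such that for every $y_0 \in A$ a random backward trajectory $(y_n)$ satisfies $\frac{1}{N}\sum_{n=0}^{N-1}\mathds{1}_C(y_n) \to \mu'(C)$ for every cylinder $C$. Picking any $y_0 \in A$, Theorem~\ref{th:cms}(a) tells me that the same $y_0$ has a random backward trajectory equidistributing for $\mu$; since cylinders are clopen in $\Sigma_M$ and therefore have $\mu$-null boundary, the weak-$*$ characterization (d) yields $\frac{1}{N}\sum \mathds{1}_C(y_n)\to\mu(C)$ as well. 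Choosing a backward trajectory lying in the intersection of the two probability-one events forces $\mu(C)=\mu'(C)$ for every cylinder $C$, and since cylinders form a $\pi$-system generating the Borel $\sigma$-algebra, $\mu'=\mu$. Ergodicity of $\mu$ then follows immediately from Lemma~\ref{lem:erg-dec}: every ergodic component of $\mu$ is fair, hence equals $\mu$ by the uniqueness just proved, so $\mu$ itself is ergodic.

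For the null recurrent and transient cases I would argue by contradiction. Suppose an (ergodic, after passing to a component via Lemma~\ref{lem:erg-dec}) fair measure $\mu'$ exists. Applying Theorem~\ref{th:back}(a) to $\mathds{1}_{[i]}$ for each state $i$ and intersecting over the countably many $i$, I get a $\mu'$-full-measure set of starting points for which the limiting frequency of visits of a random backward trajectory to $[i]$ equals $\mu'([i])$ for every $i$. But Theorem~\ref{th:cms}(b) or (c) forces this limiting frequency to be zero for every starting $y_0$ (directly in the null recurrent case; a fortiori in the transient case since only finitely many visits to $[i]$ occur). Hence $\mu'([i])=0$ for every $i$, and summing over the countably many states contradicts $\mu'(\Sigma_M)=1$.

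The main subtlety is aligning the quantifiers in the positive recurrent uniqueness argument: Theorem~\ref{th:cms} works for \emph{every} $y_0$, whereas Theorem~\ref{th:back}(a) holds only for $\mu'$-a.e.\ $y_0$, so one has to pick a single $y_0$ in the $\mu'$-full-measure set $A$ and then a single backward trajectory lying in the intersection of the two random-choice probability-one events. Once that alignment is pinned down, the remainder is essentially routine bookkeeping with the $\pi$-system of cylinders and one more application of Lemma~\ref{lem:erg-dec}.
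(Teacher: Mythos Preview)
Your proposal is correct and follows essentially the same approach as the paper's proof: both pit Theorem~\ref{th:back}(a) for an ergodic fair $\mu'$ against the universal statement of Theorem~\ref{th:cms}, use that cylinders are clopen to extract equality on cylinders, and invoke Lemma~\ref{lem:erg-dec} for the reduction to ergodic measures and for the ergodicity of $\mu$. The only cosmetic differences are that the paper handles one cylinder $C$ at a time (choosing a possibly different $y_0$ for each $C$) rather than intersecting over all cylinders up front, and in the non--positive-recurrent case the paper picks a single $[i]$ with $\mu'([i])>0$ rather than showing all $\mu'([i])=0$; neither difference is substantive.
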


\begin{proof}
Suppose first that $Q$ is positive recurrent so that the fair measure $$\mu=\text{Markov}(\pi,P)$$ exists. We wish to show that $\mu$ is the only fair measure. By Lemma~\ref{lem:erg-dec}, it suffices to show that each ergodic fair measure $\mu'$ is equal to $\mu$. Let $C=[i_0\cdots i_m]\subset \Sigma_{M}$ be a cylinder set. By Theorem~\ref{th:back} (a) applied to $\mathds{1}_C$ we can find a point $y_0$ from the $\mu'$-full measure set whose almost every backward trajectory visits $C$ with limiting frequency $\mu'(C)$. But by Theorem~\ref{th:cms} (a), the random backward trajectory $(y_n)$ of $y_0$ equidistributes for $\mu$. Since the cylinder set $C$ is both closed and open, we get $\mu(\partial C)=\mu(\emptyset)=0$. So equidistribution tells us that $(y_n)$ visits $C$ with limiting frequency $\mu(C)$. In this way we get equality $\mu(C)=\mu'(C)$ for all cylinder sets, from which it follows that $\mu'=\mu$.

Next we show the ergodicity of $\mu$. This follows immediately from Lemma~\ref{lem:erg-dec} and the fact that $\mu$ is the only fair measure. Alternatively, ergodicity follows because $\mu$ is a Markov measure whose transition matrix $P$ is positive recurrent.

Next, we consider what happens when $Q$ is null recurrent or transient. We wish to show that there are no fair measures. By Lemma~\ref{lem:erg-dec}, it suffices to show that there are no ergodic fair measures. Suppose to the contrary that $\mu'$ is an ergodic fair measure. There must be some cylinder set $[i]$ with $\mu'([i])>0$. By Theorem~\ref{th:back} (a) applied to $\mathds{1}_{[i]}$ we can find a point $y_0$ from the $\mu'$-full measure set whose almost every backward trajectory visits $[i]$ with limiting frequency $\mu'([i])$. But this contradicts Theorem~\ref{th:cms}.
\end{proof}

\begin{corollary}\label{cor:cms2}
In the positive recurrent case, for each $y_0\in\Sigma_M$, for a random choice $(y_n)$ of the backward trajectory,
\begin{equation}\label{limJac}
\lim_{n\to\infty} \sqrt[n]{c(y_0)\cdots c(y_{n-1})} = \exp \int \log(c)\,d\mu = \exp \sum_{i\in\I} \pi_i \log c_i,
\end{equation}
where all three expressions may be infinite. If additionally $- \sum \pi_i \log(\pi_i) < \infty$, then 
\begin{equation}\label{Jacfair}
\int \log(c)\,d\mu = h_\mu(\sigma) = -\sum_{i,j\in\I} \pi_i p_{ij} \log(p_{ij}) = \hfair(\sigma) < \infty.
\end{equation}
\end{corollary}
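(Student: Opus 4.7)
The plan for~(\ref{limJac}) is to lift the problem to the shift $(\Sigma_{M^T}, \sigma, \nu)$ already constructed in the proof of Theorem~\ref{th:cms}. Define $\psi : \Sigma_{M^T} \to [0,+\infty]$ by $\psi(\omega') = \log c_{\omega'_0}$. Since each $y_n = Y_n(\omega')$ lies in the cylinder $[\omega'_n] \subset \Sigma_M$ by~\eqref{recover}, one has $\log c(y_n) = \psi(\sigma^n \omega')$, and consequently
\begin{equation*}
\frac{1}{n} \log\bigl(c(y_0) \cdots c(y_{n-1})\bigr) = \frac{1}{n} \sum_{k=0}^{n-1} \psi(\sigma^k \omega').
\end{equation*}
Because $Q$ is irreducible and positive recurrent, the Markov measure $\nu = \textnormal{Markov}(\pi, Q)$ is ergodic and $\int \psi \, d\nu = \sum_i \pi_i \log c_i = \int \log c \, d\mu$. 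If this integral is finite, Birkhoff's ergodic theorem applied to $\psi$ yields convergence on a set of full $\nu$-measure, which intersects $[\omega_0]$ in a set of full $\nu_0$-measure; that is, on almost every random backward trajectory of $y_0$. Exponentiating delivers~(\ref{limJac}).

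When $\sum_i \pi_i \log c_i = +\infty$ I would truncate exactly as in the proof of Theorem~\ref{th:back}(c): apply Birkhoff to $\psi_M = \min(\psi, M)$ for each $M>0$ to obtain $\liminf_n \frac{1}{n} \sum_{k<n} \psi(\sigma^k \omega') \geq \int \psi_M \, d\nu$ almost surely, then send $M \to \infty$ and invoke monotone convergence to force the liminf to $+\infty$.

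For~(\ref{Jacfair}), under $-\sum_i \pi_i \log \pi_i < \infty$ the partition $\X = \{[i] : i \in \I\}$ is a one-sided generator for $\sigma$ of finite entropy, so $h_\mu(\sigma) \leq H_\mu(\X) < \infty$. Theorem~\ref{th:back}(d) --- equivalently Rohlin's formula $h_\mu(\sigma) = \int \log J \, d\mu$ combined with $J = c \circ \sigma$ from Lemma~\ref{lem:Jac} and shift-invariance of $\mu$ --- then yields $\int \log c \, d\mu = h_\mu(\sigma)$. A direct computation from $\mu([i_0 \cdots i_{n-1}]) = \pi_{i_0} p_{i_0 i_1} \cdots p_{i_{n-2} i_{n-1}}$ gives
\begin{equation*}
H_\mu\Bigl(\bigvee_{k=0}^{n-1} \sigma^{-k} \X\Bigr) = H_\mu(\X) - (n-1) \sum_{i,j} \pi_i p_{ij} \log p_{ij},
\end{equation*}
and dividing by $n$ identifies $h_\mu(\sigma)$ with $-\sum_{i,j} \pi_i p_{ij} \log p_{ij}$ (automatically finite since the left-hand side is at most $n H_\mu(\X)$). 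The final equality $\hfair(\sigma) = h_\mu(\sigma)$ is immediate from Corollary~\ref{cor:cms}, in which $\mu$ was shown to be the unique fair measure on $\Sigma_M$.

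The only real obstacle is the non-integrable case of~(\ref{limJac}): the claim must hold for \emph{every} starting $y_0 \in \Sigma_M$, with the right-hand side possibly infinite, so Birkhoff alone does not suffice and the monotone truncation step is essential. Everything else reduces to standard Markov-chain entropy book-keeping, with $H_\mu(\X) < \infty$ guaranteeing absolute convergence of every series that appears.
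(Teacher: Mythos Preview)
Your argument is correct and close in spirit to the paper's, but the organisation for~\eqref{limJac} differs in a way worth noting. The paper first invokes Theorem~\ref{th:back}(c), which only gives the limit for $\mu$-almost every $y_0$; it then extends to every $y_0$ by observing that two points in the same length-$1$ cylinder have backward trajectories identifiable through the same $\omega'\in\Sigma_{M^T}$, and that $c(y_n)$ depends only on $\omega'_n$. You instead apply Birkhoff directly to $\psi$ on $(\Sigma_{M^T},\sigma,\nu)$ and use that $\nu_0$ is the normalised restriction of $\nu$ to $[\omega_0]$, which delivers the result for every $y_0$ in one stroke. Both arguments rest on the same observation --- that $c(y_n)=c_{\omega'_n}$ is a function of $\sigma^n\omega'$ alone --- but yours packages it more cleanly and avoids the post-hoc extension step. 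For~\eqref{Jacfair} the two proofs are essentially identical; you add an explicit computation of the Markov-measure entropy where the paper simply cites it as well known.
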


\begin{proof}
Theorem~\ref{th:back}~(c) applied to the measure $\mu$ gives~\eqref{limJac} for $\mu$-almost every $y_0$. In particular, in each cylinder set $[i]$ we get the result for at least one point $y'_0$. Now let $y_0$ be any other point in the same cylinder $[i]$. There is a natural way to identify backward trajectories of $y_0$ and $y'_0$. In the language of the probability model $(\Sigma_{M^T},\nu_0)$ developed earlier, we identify $(y_n)$ with $(y'_n)$ if they both arise from the same point $\omega'\in W_0$. But then $c(y_n)=c(y'_n)$ for all $n$. This shows that~\eqref{limJac} holds not just almost everywhere, but in fact for every $y_0\in\Sigma_M$.

The partition by length 1 cylinder sets $\X=\{[i] ; i\in \I \}$ is a one-sided generating partition and the condition $- \sum \pi_i \log(\pi_i) < \infty$ just says that the (Shannon) entropy $H_\mu(\eta)$ of this partition is finite. Then by Theorem~\ref{th:back}~(d) we get the first equality in~\eqref{Jacfair}. The sum in~\eqref{Jacfair} is just the well-known formula for the entropy of a Markov measure. This is also the fair entropy, since $\mu$ is the unique fair measure. Finally, since $\eta$ is a finite-entropy generating partition, the (Kolmogorov-Sinai) entropy $h_\mu(\sigma)$ is less than or equal to $H_\mu(\X)$ and is therefore finite, see~\cite[Equation (9.1.15) and Corollary 9.2.5]{VO}.
\end{proof}


We now show some examples on known shift spaces. Later we will associate them with interval maps too.

\vspace{0.1in}

\begin{example}\label{ex:null} (Null recurrent case)
We consider a classic unbiased random walk on $\mathbb Z $, where we can go 1 step forward or 1 step backward from each state. Here is the transition diagram:
\begin{center}
\begin{tikzcd}
{\cdots} &[-30]
{\circ} \arrow[r, bend left=25]
& \circ 
 \arrow[r, bend left=25]
\arrow[l, bend left=25]
& \circ
\arrow[r, bend left=25]
\arrow[l, bend left=25]
& \circ
\arrow[l, bend left=25]
&[-30pt]\cdots
\end{tikzcd}
\end{center}
\vspace{.5em}
We calculate the transition matrix $M$ in the standard way and by (\ref{Q}) we get the corresponding stochastic matrix $Q$.

\begin{equation*}
M =
\begin{bmatrix}
 \ddots &  \vdots & \vdots & \vdots & \vdots & \vdots & \reflectbox{$\ddots$} \\
\cdots& 0 & 1 & 0 & 0 &0 & \cdots \\
\cdots&1 & 0 & 1 & 0 &0 & \cdots \\
\cdots &0& 1 & 0 & 1 & 0 & \cdots \\
\cdots &0&  0& 1 & 0 & 1  & \cdots \\
\cdots &0&  0& 0& 1 & 0 &  \cdots \\
\reflectbox{$\ddots$} &  \vdots & \vdots & \vdots & \vdots& \vdots &\ddots
\end{bmatrix}
,\qquad
\renewcommand{\arraystretch}{1.2}
Q=
\begin{bmatrix}
 \ddots &  \vdots & \vdots & \vdots & \vdots & \vdots & \reflectbox{$\ddots$} \\
\cdots& 0 & \frac12 & 0 & 0 &0 & \cdots \\
\cdots& \frac12 & 0 & \frac12 & 0 &0 & \cdots \\
\cdots &0& \frac12 & 0 & \frac12 & 0 & \cdots \\
\cdots &0&  0& \frac12 & 0 & \frac12  & \cdots \\
\cdots &0&  0& 0& \frac12 & 0 &  \cdots \\
\reflectbox{$\ddots$} &  \vdots & \vdots & \vdots & \vdots& \vdots &\ddots
\end{bmatrix}.
\end{equation*}

We already know from probability theory (see \cite{Fel}) that 
 $Q$ is null recurrent, so by Corollary \ref{cor:cms} there is no fair measure in this case. Nevertheless $ \sqrt[n]{c(y_0)\cdots c(y_{n-1})} \rightarrow 2$ as $n \rightarrow \infty.$
%
%

\end{example}

\begin{example}\label{ex:transient} (Transient case)
A biased random walk on $\mathbb Z $  can be defined as the option to go 2 steps forward or 1 step backward from any state. Here is the transition diagram:
\begin{center}
\begin{tikzcd}
{\cdots} \arrow[rr,  dashed, bend left=25]
&[-30]
{\circ} \arrow[rr, bend left=25]
& \circ 
 \arrow[rr, bend left=25]
\arrow[l, bend left=25]
& \circ
\arrow[rr, bend left=25]
\arrow[l, bend left=25]
& \circ
\arrow[rr, bend left=25]
\arrow[l, bend left=25]
& \circ
\arrow[rr,  dashed, bend left=25]
\arrow[l, bend left=25]
& \circ
\arrow[l, bend left=25]
&[-30pt]\cdots
\end{tikzcd}
\end{center}
\vspace{.5em}
The matrices $M$ and $Q$ can be found the same way as before.
\begin{equation*}
M =
\begin{bmatrix}
 \ddots &  \vdots & \vdots & \vdots & \vdots & \vdots & \vdots & \reflectbox{$\ddots$} \\
\cdots& 0& 0 & 1 & 0 & 0 &0 & \cdots \\
\cdots&1& 0 & 0 & 1 & 0 &0 & \cdots \\
\cdots &0& 1 & 0& 0 & 1 & 0 & \cdots \\
\cdots &0&  0& 1 & 0& 0 & 1  & \cdots \\
\cdots &0&  0& 0& 1 & 0& 0 &  \cdots \\
\reflectbox{$\ddots$} &  \vdots & \vdots & \vdots & \vdots & \vdots& \vdots &\ddots
\end{bmatrix}
,\qquad
\renewcommand{\arraystretch}{1.2}
Q=
\begin{bmatrix}
 \ddots &  \vdots & \vdots & \vdots & \vdots & \vdots & \vdots & \reflectbox{$\ddots$} \\
\cdots& 0 & \frac12 & 0 & 0 & 0 &0 & \cdots \\
\cdots& 0 & 0 & \frac12 & 0 & 0 &0 & \cdots \\
\cdots & \frac12 & 0  & 0 & \frac12 & 0  & 0 & \cdots \\
\cdots &0 & \frac12& 0  & 0 & \frac12 & 0  & \cdots \\
\cdots &0&  0& \frac12& 0  & 0 & \frac12 &  \cdots \\
\reflectbox{$\ddots$} &  \vdots & \vdots& \vdots & \vdots & \vdots& \vdots &\ddots
\end{bmatrix}.
\end{equation*}
Then $\left(Q^{3n}\right)_{00}= \dfrac{(3n)!}{(2n)!n!}\cdot\dfrac{1}{2^{3n}}$, and so $\sum_{n\in \mathbb N} \left(Q^{n}\right)_{00} < \infty$ which by \cite[pg. 389]{Fel} shows us that the stochastic matrix is  transient and therefore by Corollary \ref{cor:cms} there is no fair measure.

Another classic shift example is the unbiased random walk on $\mathbb Z^3$. We will get similar results. The stochastic matrix  $Q$ is transient and so again by Corollary \ref{cor:cms} there is no fair measure in this case.

\end{example}

\begin{example} (Positive recurrent case)
For our  last example on shift spaces we choose a process which can be defined as an option to go anywhere from the origin or 1 step backward from any other state, as is shown in the following transition diagram and matrices:
\begin{center}
\begin{tikzcd}
 \circ 
 \ar[loop,out=150,in=210,distance=30]
  \arrow[r, bend left=20]
    \arrow[rr, bend left=20]
     \arrow[rrr, bend left=20]
      \arrow[rrrr, bend left=20]
& \circ 
\arrow[l, bend left=20]
& \circ 
\arrow[l, bend left=20]
& \circ 
\arrow[l, bend left=20]
& \circ 
\arrow[l, bend left=18]
&[-30pt]\cdots
\end{tikzcd}
\end{center}
\begin{equation*}
M =
\begin{bmatrix}
& 1& 1 & 1 & 1 & 1 &1 & \cdots \\
&1& 0 & 0 & 0 & 0 &0 & \cdots \\
&0& 1 & 0& 0 & 0 & 0 & \cdots \\
&0&  0& 1 & 0& 0 & 0  & \cdots \\
&0&  0& 0& 1 & 0& 0 &  \cdots \\
 & \vdots & \vdots & \vdots & \vdots & \vdots& \vdots &\ddots
\end{bmatrix}
,\qquad
\renewcommand{\arraystretch}{1.2}
Q=
\begin{bmatrix}

&  \frac12 &  \frac12 & 0 & 0 & 0 &0 & \cdots \\
&  \frac12 & 0 & \frac12 & 0 & 0 &0 & \cdots \\
&  \frac12 & 0 & 0 & \frac12 & 0 &0 & \cdots \\
&  \frac12 & 0 & 0 & 0 & \frac12 &0 & \cdots \\
&  \frac12 & 0 & 0 & 0 & 0 &\frac12 & \cdots \\
&  \vdots & \vdots& \vdots & \vdots & \vdots& \vdots &\ddots
\end{bmatrix}.
\end{equation*}
%
%

The reader can easily check that
$\pi= \begin{bmatrix} \frac{1}{2} \, \frac{1}{4} \cdots \frac{1}{2^i} \cdots \end{bmatrix}$
satisfies~\eqref{pi}. Therefore $Q$ is positive recurrent and there is a unique fair measure. We can also calculate $P$ using~\eqref{P}:
\begin{equation*}
P =
\begin{bmatrix}
& \frac12& \frac14 & \frac18 & \frac{1}{16}  & \cdots & \frac{1}{2^i}  & \cdots \\
&1& 0 & 0 & 0 & \cdots  &0 & \cdots \\
&0& 1 & 0& 0 & \cdots  & 0 & \cdots \\
&0&  0& 1 & 0&\cdots  & 0  & \cdots \\
&0&  0& 0& 1 & \cdots & 0 &  \cdots \\
 & \vdots & \vdots & \vdots & \vdots & \vdots& \vdots &\ddots
\end{bmatrix}
\end{equation*}
And so the measure $\textnormal{Markov}(\pi,P)$ is the fair measure. 
 Finally, we may calculate the fair entropy as the entropy of the measure $\textnormal{Markov}(\pi,P)$ on the shift space, using the well-known formula for the entropy of a Markov measure,

\begin{equation*}
\hfair(f)=-\sum_{ij} \pi_i p_{ij} \log p_{ij} = \log 2.
\end{equation*}
This is the same as the Gurevich entropy of $(\Sigma_M, \sigma)$, and so the fair measure is the maximal measure.
\end{example}

\section{Isomorphisms for Fair Measures}\label{sec:iso}

We would like to extend our results from shift spaces to various maps on the interval, dendrites, etc. through the use of Markov partitions. But before we can proceed, we need to develop a notion of isomorphism. Our notion is inspired by the isomorphisms modulo small sets in~\cite{Hof}. For a system $(X,f)$, a set $B\subset X$ is called \emph{totally invariant} if $f^{-1}(B)=B$.

\begin{definition}
Two systems $(X_1,f_1)$, $(X_2, f_2)$ are called \emph{isomorphic modulo countable invariant sets} if there exist totally invariant countable sets $N_1\subset X_1$, $N_2\subset X_2$ and a bijection $\psi:X_1\setminus N_1 \to X_2\setminus N_2$, bimeasurable with respect to the Borel $\sigma$-algebras, such that $f_2\circ\psi=\psi\circ f_1$.
\end{definition}

From the point of view of fair measures, deleting countable totally invariant sets is rather harmless. For the points that remain, the tree of preimages is unchanged, so that the meaning of a random backward orbit is the same as before. And countable sets have measure zero for every non-atomic measure. Letting $\Mna(\cdot,\cdot)$ denote the set of non-atomic invariant Borel probability measures for a system, we get bijections
\begin{equation*}
\Mna(X_i,f_i) \to \Mna(X_i\setminus N_i,f_i), \quad i=1,2,
\end{equation*}
given by restriction $\mu \mapsto \mu|_{X_i\setminus N_i}$, as well as a bijection
\begin{equation*}
\Mna(X_1\setminus N_1,f_1) \to \Mna(X_2\setminus N_2, f_2)
\end{equation*}
given by composition $\mu \mapsto \mu \circ \psi^{-1}$.
If $\mu_1\in\Mna(X_1,f_1)$ and $\mu_2\in\Mna(X_2,f_2)$ correspond to each other under these bijections, then $\psi:(X_1,f_1,\mu_1) \to (X_2,f_2,\mu_2)$ is a conjugacy in the sense of measure theory (where null sets are negligible). Consequently,
\begin{itemize}
\item the entropies are equal $h_{\mu_1}=h_{\mu_2}$,
\item $\mu_1$ is ergodic if and only if $\mu_2$ is, and
\item The Jacobians are related by $J_{\mu_1} = J_{\mu_2} \circ \psi$.
\end{itemize}
But total invariance of our deleted sets gives $c_1\circ f_1 = c_2 \circ f_2 \circ \psi$ on $X_1\setminus N_1$, i.e. $\mu_1$-almost everywhere. In light of Lemma~\ref{lem:Jac} we can conclude that if $\mu_2$ is fair, then so is $\mu_1$. By the symmetry of the situation, the reverse implication holds also. In particular, we have proved

\begin{theorem}\label{th:isom}
A Borel isomorphism modulo countable invariant sets induces an entropy-preserving bijection of non-atomic fair measures. Moreover, this implies that the two systems have the same fair entropy.
\end{theorem}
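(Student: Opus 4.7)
The plan is to chain together the three bijections already identified in the paragraph preceding the theorem: restriction $\mu\mapsto\mu|_{X_1\setminus N_1}$, pushforward $\mu\mapsto\mu\circ\psi^{-1}$, and extension by zero from $X_2\setminus N_2$ back up to $X_2$. Denote the composite by $\Phi:\Mna(X_1,f_1)\to\Mna(X_2,f_2)$. Each factor is a bijection (using that $N_1,N_2$ are countable and hence null for every non-atomic measure), and $f_2$-invariance of $\Phi(\mu_1)$ is inherited through $f_2\circ\psi=\psi\circ f_1$. So $\Phi$ is a well-defined bijection.

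Next I would verify that $\Phi$ restricts to a bijection between non-atomic fair measures. By Lemma~\ref{lem:Jac}, $\mu_i$ is fair if and only if its Jacobian equals $c_i\circ f_i$ almost everywhere. Under the measure-theoretic conjugacy $\psi:(X_1\setminus N_1,\mu_1)\to(X_2\setminus N_2,\Phi(\mu_1))$ the Jacobians transform covariantly, $J_{\mu_1}=J_{\Phi(\mu_1)}\circ\psi$ almost everywhere. Meanwhile, total invariance $f_i^{-1}(N_i)=N_i$ ensures that for every $x\in X_1\setminus N_1$ the map $\psi$ restricts to a bijection between $f_1^{-1}(x)$ and $f_2^{-1}(\psi(x))$, so $c_1=c_2\circ\psi$ on $X_1\setminus N_1$. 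Composing with $f_i$ and combining with the Jacobian identity, the two fairness conditions are equivalent. The entropy equality $h_{\mu_1}(f_1)=h_{\Phi(\mu_1)}(f_2)$ is the standard consequence of a measure-theoretic isomorphism.

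For the fair entropy claim, I would observe that atomic fair measures contribute nothing to the supremum: by the ergodic decomposition every purely atomic invariant measure is an average of uniform measures on periodic orbits, each of which has Kolmogorov--Sinai entropy zero, and the atomic part of any invariant measure is itself invariant. Hence, with the stated convention that $\sup\emptyset=0$,
\begin{equation*}
\hfair(f_i)=\sup\{h_\mu(f_i)\mid \mu\in\Mna(X_i,f_i)\text{ is fair}\},
\end{equation*}
and this quantity is preserved by the entropy-preserving bijection $\Phi$.

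The main point requiring care is the identity $c_1=c_2\circ\psi$: one must check that no $f_1$-preimage of a point of $X_1\setminus N_1$ escapes into $N_1$ and that the same holds on the $X_2$ side, which is exactly what $f_i^{-1}(N_i)=N_i$ guarantees. Once this is spelled out, the rest of the argument is bookkeeping with the Jacobian characterization of fairness and the standard behavior of entropy under measure-theoretic conjugacy.
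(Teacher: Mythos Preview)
Your proposal is correct and follows essentially the same route as the paper: the paper's ``proof'' is the paragraph immediately preceding the theorem, which chains the same restriction/pushforward bijections, invokes the Jacobian relation $J_{\mu_1}=J_{\mu_2}\circ\psi$, and uses total invariance to get $c_1\circ f_1=c_2\circ f_2\circ\psi$ (your identity $c_1=c_2\circ\psi$ is the slightly stronger pointwise statement from which this follows after composing with $f_1$). Your treatment of the fair-entropy clause---arguing via ergodic decomposition that atomic fair measures contribute zero entropy, so the supremum may be taken over non-atomic fair measures alone---is actually more explicit than the paper, which states the conclusion without spelling out this reduction.
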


We remark that in general, fairness of a measure is not an invariant of measure theoretic conjugacy. This is because adding or deleting a set of measure zero can change the function $c\circ f$ almost everywhere, cf. Lemma~\ref{lem:Jac}. Figure~\ref{fig:1} illustrates how this might happen.

\begin{figure}[htb!!]
\vspace{1em}
\scalebox{.95}{
\includegraphics[width=.3\textwidth]{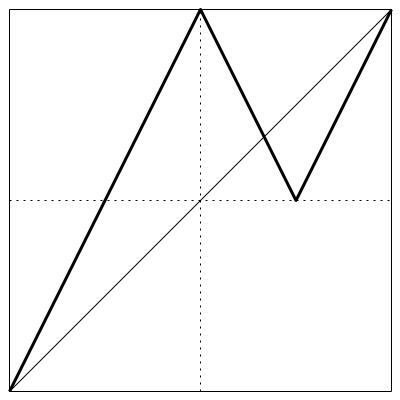}
\begin{picture}(0,0)
\put(0,90){\scriptsize 3 preimages}
\put(0,30){\scriptsize 1 preimage}
\put(-133,-2){$\underbrace{\hspace{5.2em}}_{\substack{\text{zero}\\\text{measure}}}$}
\put(-72,-2){$\underbrace{\hspace{5.2em}}_{\substack{\text{normalized}\\\text{Lebesgue measure}}}$}
\put(-100,135){\small (a) Not fair}
\end{picture}
\hspace{.2\textwidth}
\includegraphics[width=.3\textwidth]{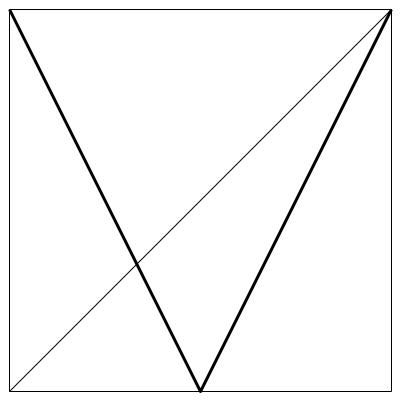}
\begin{picture}(0,0)
\put(0,60){\scriptsize 2 preimages}
\put(-131,-2){$\underbrace{\hspace{10.4em}}_{\text{Lebesgue measure}}$}
\put(-90,135){\small (b) Fair}
\end{picture}
}
\\[1.5em]
\caption{Deleting the (measure zero) left half of the interval and rescaling gives a measure theoretic conjugacy, but system (a) is not fair while system (b) is fair.}\label{fig:1}
\end{figure}

Fair measures with atoms are not addressed in Theorem~\ref{th:isom}, but are rather simple to understand. If $(X,f)$ has a totally invariant periodic orbit, then we can equidistribute point masses along this orbit and we obtain a purely atomic ergodic fair measure. Conversely, we observe that

\begin{proposition}\label{prop:atoms}
All the atoms of a fair measure belong to totally invariant periodic orbits.
\end{proposition}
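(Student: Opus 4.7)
The plan is to show, for any atom $x$ of a fair measure $\mu$, three things in turn: (i) masses of $f$-images grow geometrically along the forward orbit, (ii) the forward orbit of $x$ must therefore be finite (eventually periodic), and (iii) the branching number $c$ equals $1$ on that periodic orbit, which forces $x$ itself to lie on it and forces total invariance.

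First, I would note that since $\{x\}$ is always contained in an element of $\A$, so is $\{f(x)\}$, and fairness applies. If $c(f(x)) = \infty$ then Lemma~\ref{lem:basic}(a) gives $\mu(\{f(x)\}) = 0$; but invariance forces $\mu(\{f(x)\}) \ge \mu(\{x\}) > 0$, a contradiction. Hence $c(f(x))$ is finite, and $f^{-1}(\{f(x)\})$ splits into one point per branch $X_i$, each of equal $\mu$-mass $\mu(\{f(x)\})/c(f(x))$ by~\eqref{fair}. Since $x$ is itself one of those preimages, I obtain the key identity
\begin{equation*}
\mu(\{f(x)\}) \;=\; c(f(x))\,\mu(\{x\}).
\end{equation*}
Iterating, $\mu(\{f^n(x)\}) = \mu(\{x\}) \prod_{k=1}^{n} c(f^k(x))$, a nondecreasing sequence.

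Next, I would argue the forward orbit $\{f^n(x)\}_{n\ge 0}$ is finite. Suppose for contradiction that the points $f^n(x)$ are pairwise distinct. Then $\mu$ has countably many atoms $\{f^n(x)\}$ of mass at least $\mu(\{x\}) > 0$, contradicting $\mu$ being a probability measure. So there exist $0 \le m < n$ with $f^m(x) = f^n(x)$, and I take the minimal such $m$. Set $k = n - m$ and let $O = \{f^m(x), f^{m+1}(x), \ldots, f^{m+k-1}(x)\}$, a periodic orbit of period (dividing) $k$.

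Now I would exploit that on $O$ the masses are both nondecreasing under $f$ and periodic, so constant; this combined with the identity above forces $c(q) = 1$ for every $q \in O$. In particular, each $q \in O$ has a unique $f$-preimage, namely the preceding point in the cycle, which itself lies in $O$. This has two consequences. First, if $m \ge 1$ then $f^{m-1}(x)$ is a preimage of $f^m(x) \in O$, so must equal the unique such preimage, a point of $O$, contradicting the minimality of $m$. Hence $m = 0$ and $x \in O$. Second, $f^{-1}(O) \subseteq O$, and since $O \subseteq f^{-1}(f(O)) = f^{-1}(O)$, equality holds: $O$ is totally invariant.

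The only mildly delicate step is the very first one — justifying that fairness applied to the singleton $\{f(x)\}$ really yields $\mu(\{f(x)\}) = c(f(x))\,\mu(\{x\})$ even when $c$ could a priori be infinite — but Lemma~\ref{lem:basic}(a) handles this cleanly. Everything afterwards is a pigeonhole / finiteness argument plus the bookkeeping of preimages forced by $c \equiv 1$ on the cycle.
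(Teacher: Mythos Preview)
Your proof is correct. The paper's argument is shorter but leans on an outside fact: it first cites as a ``general fact'' that atoms of any invariant probability measure lie on periodic orbits, all points carrying the same mass $m$ (this uses Poincar\'e recurrence to exclude strictly pre-periodic atoms), and then observes that if such an orbit $P$ were not totally invariant, some $x\in P$ would have $c(x)\ge 2$, whence its $P$-preimage carries mass both $m$ and $m/c(x)$, forcing $m=0$. Your route is more self-contained: you extract the identity $\mu(\{f(x)\})=c(f(x))\,\mu(\{x\})$ directly from fairness, use the resulting growth to get eventual periodicity and then $c\equiv 1$ on the cycle, and from $c\equiv 1$ deduce both that $x$ itself lies on the cycle (no appeal to recurrence needed) and that the cycle is totally invariant. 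The paper's version is slicker once the general fact is granted; yours stands on its own without it.
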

\begin{proof}
As a general fact regarding invariant probability measures, atoms can only occur in periodic orbits and with each point of a periodic orbit $P$ having the same mass $m$. If $P$ is not totally invariant, then at least one point $x\in P$ has $c(x)\geq2$. Then the preimage of $x$ in $P$ has mass under a fair measure equal to both $m$ and $\frac{m}{c(x)}$, which implies that $m=0$.
\end{proof}

\section{Countably Markov and mixing Interval Maps}\label{LIM}

Throughout this section our space is the unit interval $X=[0,1]$, our map $f$ is allowed to have countably many pieces of continuity and monotonicity, and our partition $\X$ is Markov. That means that
\begin{itemize}
\item $\X$ consists of open intervals $(a,b)$ and singletons $\{x\}$. We write $$\I=\left\{i=(a,b);~(a,b)\in\X\right\}, \quad C=\left\{x;~\{x\}\in\X\right\}$$
for the sets of \emph{partition intervals} and \emph{partition points}.
\item For each $i\in\I$, the restriction $f|_i$ is continuous and strictly monotone.
\item For each pair $i,j\in\I$, either $f(i)\supset j$ or else $f(i)\cap j=\emptyset$.
\item The partition points form a forward invariant set $f(C)\subset C$.
\end{itemize}
We assume additionally that $f$ is \emph{mixing}, i.e. for each pair $U,V$ of nonempty open sets there is $N\geq0$ such that for all $n\geq N$, $f^n(U)\cap V\neq\emptyset$. If all these assumptions are satisfied, then we call $f$ \emph{countably Markov and mixing}.

A \emph{homterval} for an interval map $f$ is a nonempty open interval $U\subset[0,1]$ such that for all $n\geq0$, $f$ maps $f^n(U)$ homeomorphically onto $f^{n+1}(U)$.
\begin{lemma}
A countably Markov and mixing interval map has no homtervals.
\end{lemma}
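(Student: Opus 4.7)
Suppose for contradiction that $U$ is a homterval. The plan is to build a dense, forward-invariant open subinterval $W\subseteq[0,1]$ on which $f$ acts monotonically, which is impossible for a topologically mixing map.

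First I would set $W := \bigcup_{n\geq 0} f^n(U)$ and verify it has the desired structure. Applying mixing to the pair $(U,U)$ yields $N$ with $f^n(U)\cap U\neq\emptyset$ for all $n\geq N$, so the pieces $U, f^N(U), f^{N+1}(U),\ldots$ all meet $U$ and their union is connected. For each $0\leq j<N$, applying mixing to $(U,f^j(U))$ produces some $n\geq N$ with $f^n(U)\cap f^j(U)\neq\emptyset$, linking $f^j(U)$ to the main piece, so $W$ is an interval. Applying mixing to $(U,V)$ for an arbitrary nonempty open $V\subseteq[0,1]$ shows $W$ is dense, and forward invariance $f(W)\subseteq W$ is immediate. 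Moreover, since each $f|_{f^n(U)}$ is a homeomorphism onto $f^{n+1}(U)$ and the restrictions agree on overlaps (all being restrictions of the global $f$), the map $f|_W$ is continuous and locally injective on the connected interval $W$; the standard fact that a continuous, locally injective map of an interval is strictly monotone then gives that $f|_W$ is monotone. Replacing $f$ by $f^2$ if necessary (which is still mixing), I may assume $f|_W$ is monotone increasing.

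The contradiction then follows from elementary monotone dynamics. The case $f|_W=\mathrm{id}$ trivially fails to be mixing. Otherwise $W\setminus\mathrm{Fix}(f|_W)$ is nonempty and open, so I would pick a small open interval $A$ lying in a single connected component of $W\setminus\mathrm{Fix}(f|_W)$. On that component $f-\mathrm{id}$ has constant sign, so orbits from both endpoints of $A$ are strictly monotone in the same direction and converge to the same limit $L$ (either a fixed point of $f|_W$ or the adjacent endpoint of $W$); by monotonicity of $f^n|_W$, the interval $f^n(A)$ has its two endpoints converging to $L$ and therefore shrinks to $\{L\}$. Choosing any nonempty open $B\subseteq[0,1]$ disjoint from a neighborhood of $L$ then gives $f^n(A)\cap B=\emptyset$ for all large $n$, contradicting topological mixing applied to the open sets $A$ and $B$.

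The main obstacle, in my view, is Step 1 — establishing that $W$ is truly a single interval rather than a scattered union. A priori the iterates $f^n(U)$ could be spread throughout $[0,1]$, and it is only the full strength of mixing (overlap of $U$ with every sufficiently late iterate, plus a similar overlap for each early iterate) that stitches them into one connected piece. Once monotonicity of $f|_W$ is in hand, the clash with mixing is routine.
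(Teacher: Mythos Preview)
Your proof is correct but takes a genuinely different route from the paper's. The paper's argument is very short: it first asserts that a mixing interval map must have a \emph{critical point} $x\in(0,1)$ (a point where $f$ is monotone on no neighborhood), then applies mixing to the pair $V=[0,x)$, $V'=(x,1]$ to force some $f^n(U)$ to contain points on both sides of $x$; since $f^n(U)$ is an interval (homeomorphic image of $U$), it is then a neighborhood of $x$, and $f|_{f^n(U)}$ cannot be a homeomorphism, contradicting the homterval property directly. Your approach instead builds $W=\bigcup_n f^n(U)$, shows it is a dense forward-invariant interval on which $f$ is monotone, and extracts the contradiction from elementary monotone interval dynamics. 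The paper's route is quicker but leans on the (stated, not proved) existence of a critical point; your route is longer but entirely self-contained, and in effect supplies a proof of that missing claim, since you show directly that a map acting monotonically on a dense forward-invariant subinterval cannot be mixing.
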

\begin{proof}
A mixing interval map has at least one \emph{critical point} $x\in(0,1)$, such that $f$ is not monotone on any neighborhood of $x$ . Let $U$ be any nonempty open interval. By the mixing property applied to $U$, $V=[0,x)$, and $V'=(x,1]$, there is a common value $n$ such that $f^n(U)$ meets both $V$ and $V'$. If $U$ were a homterval, then by the intermediate value theorem $f^n(U)$ would be a neighborhood of $x$, but then $f$ could not map $f^n(U)$ homeomorphically onto $f^{n+1}(U)$.
\end{proof}

To a countably Markov and mixing interval map $f$ we associate a \emph{transition matrix} $M$ with rows and columns indexed by $\I$ and entries
\begin{equation}
\label{mij}
m_{ij}=\begin{cases}
0,&\text{ if }f(i)\cap j=\emptyset\\
1,&\text{ if }f(i)\supset j
\end{cases}.
\end{equation}
Using the mixing property of $f$ it is easy to see that $M$ is irreducible. Thus we may form the shift space $\Sigma_M$ and look for fair measures as in Section~\ref{sec:cms}.


All of our results about Markov interval maps flow out of the following isomorphism theorem.

\begin{theorem}\label{th:cmm}
Let $f$ be countably Markov and mixing with transition matrix $M$. Then $(\Sigma_M,\sigma)$ and $([0,1],f)$ are isomorphic modulo countable invariant sets.
\end{theorem}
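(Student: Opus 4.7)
My plan is to construct the itinerary coding
$\psi \colon \Sigma_M \to [0,1]$, $\psi(\omega) = \bigcap_{n \ge 0} f^{-n}(\overline{i_n})$,
and then restrict to complements of well-chosen countable totally invariant sets on each side. First I would verify that every finite intersection $K_n(\omega) = \bigcap_{k=0}^{n} f^{-k}(\overline{i_k})$ is a nonempty closed subinterval of $\overline{i_0}$: non-emptiness is a routine induction using the Markov inclusion $f(i_k) \supset i_{k+1}$ to pull preimages back through the monotone branch $f|_{\overline{i_k}}$. Then $K_\infty(\omega) = \bigcap_n K_n(\omega)$ is nonempty by compactness, and cannot contain an open interval, since any such interval would be mapped homeomorphically into $\overline{i_n}$ by every $f^n$ and hence be a homterval, contradicting the preceding lemma. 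So $K_\infty(\omega)$ is a single point, and $\psi$ is a well-defined continuous map satisfying $\psi \circ \sigma = f \circ \psi$.

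For the deleted sets I take $N_2 = \bigcup_{n \ge 0} f^{-n}(C)$ and $N_1 = \psi^{-1}(N_2)$. Total invariance of $N_2$ is immediate from $f(C) \subset C$, and $N_2$ is countable because $C$ is countable and each fiber $f^{-1}(c)$ is countable (each monotone branch $f|_i$ contributes at most one preimage, plus the countable contribution from $C$ itself). Total invariance of $N_1$ then follows from the semiconjugacy. Bijectivity and bimeasurability on the complements should be essentially automatic: for $x \in [0,1] \setminus N_2$ each $f^n(x)$ lies in a unique open partition interval $i_n$, giving an itinerary $\phi(x) \in \Sigma_M$ with $\psi(\phi(x)) = x$; conversely, if $\psi(\omega) = \psi(\omega') = x \notin N_2$ then each $\omega_n$ must equal that unique $i_n$. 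The inverse $\phi$ pulls each cylinder $[i_0 \cdots i_m]$ back to the open set $\bigcap_{k=0}^m f^{-k}(i_k)$, so it is Borel.

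The one step requiring real work is showing $N_1$ is countable; since $N_2$ is countable this reduces to proving $|\psi^{-1}(x)| \le 2$ for every $x \in N_2$. Let $n_0 = \min\{n \ge 0 : f^n(x) \in C\}$. For $k < n_0$ the iterate $f^k(x)$ lies in a unique open partition interval, forcing $i_k$. At step $n_0$ both partition intervals $L, R$ adjacent to $f^{n_0}(x)$ are Markov-admissible: either $n_0 = 0$ with no constraint from the left, or $n_0 \ge 1$ and $f^{n_0}(x) \in C$ lies in the interior of the open interval $f(i_{n_0-1})$, so the Markov condition forces $f(i_{n_0-1}) \supset L$ and $f(i_{n_0-1}) \supset R$. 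The crucial observation is that for $k \ge n_0$, $f^k(x) \in C$ is a boundary point of $\overline{i_k}$, so by strict monotonicity of $f|_{i_k}$ the image $f(i_k)$ is an open interval with $f^{k+1}(x)$ as an endpoint; it therefore contains at most one of the two partition intervals adjacent to $f^{k+1}(x)$, and the Markov inclusion $f(i_k) \supset i_{k+1}$ then forces $i_{k+1}$ uniquely from $i_k$. So $x$ has at most two codings, corresponding to the two choices at step $n_0$, and $N_1$ is countable. This monotonicity propagation is the heart of the proof: it is what turns the a priori uncountable coding ambiguity above partition orbits into a harmless two-to-one ambiguity.
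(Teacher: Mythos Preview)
Your overall skeleton matches the paper's --- the coding map, the deleted sets $N_2=\bigcup_n f^{-n}(C)$ and $N_1=\psi^{-1}(N_2)$, and the continuity checks --- but there is a genuine gap: you implicitly assume $f$ extends continuously to each $\overline{i}$, so that the value $f(c)$ at an endpoint $c\in C$ of $i$ agrees with the one-sided limit of $f|_i$. The paper does not assume this; $f$ may jump at partition points (it explicitly notes ``we have no control over the value $f(x)$ when $x\in C$ is not a continuity point''). This breaks two of your steps. First, the sets $K_n(\omega)=\bigcap_{k\le n} f^{-k}(\overline{i_k})$ need not be closed intervals: take $i_0=(0,1)$, $f|_{i_0}(x)=2x$, $i_1=(1,2)$, and set $f(0)=1$, $f(1)=5$; then $K_1=[\tfrac12,1)\cup\{0\}$, so the compactness argument for your $\psi$ is unjustified. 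Second, and more seriously, your ``monotonicity propagation'' asserts that for $k\ge n_0$ the point $f^{k+1}(x)$ is an endpoint of the open interval $f(i_k)$. But that endpoint is the \emph{continuous extension} of $f|_{i_k}$ to the boundary point $f^k(x)\in C$, which need not equal $f(f^k(x))=f^{k+1}(x)$. If $f^{k+1}(x)$ lands instead in the interior of $f(i_k)$, both partition intervals adjacent to it may be Markov-admissible from $i_k$, and your uniqueness of $i_{k+1}$ fails. A priori this allows a full binary tree of codings and hence an uncountable $N_1$.

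The paper sidesteps all of this by never iterating $f$ through points of $C$. It defines $\psi$ via $\bigcap_n \overline{U_n(\omega)}$ with $U_n(\omega)=i_0\cap f^{-1}(i_1)\cap\cdots\cap f^{-n}(i_n)$ (closures taken \emph{after} pulling back through the open monotone branches, so each $U_n(\omega)$ really is an open interval), and proves the at-most-two bound entirely at the base point $x$: once $x$ becomes an endpoint of $U_{n_0}(\omega)$ it remains an endpoint of every $U_j(\omega)$ for $j\ge n_0$, and two distinct continuations at step $j{+}1$ would yield disjoint subintervals $U_{j+1}(\omega)$, $U_{j+1}(\omega')$ of $U_j(\omega)$ all sharing the endpoint $x$, which is impossible. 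That geometric argument in the domain replaces your forward-orbit argument in the range, and it is precisely what is needed when $f$ is discontinuous at $C$.
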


Before beginning the proof we record one more or less standard lemma.
\begin{lemma}\label{lem:symbolic}
Let $f$ be countably Markov and mixing with transition matrix $M$.
\begin{enumerate}[label=(\alph*)]
\item\label{lem:symbolic_a}
For each nonempty cylinder set $[i_0\cdots i_n]\subset \Sigma_M$, the set $$U=i_0 \cap f^{-1}(i_1) \cap \cdots \cap f^{-n}(i_n)$$ is a nonempty open interval mapped homeomorphically by $f^n$ onto $i_n$.
\item\label{lem:symbolic_b}
If $[i'_0 \cdots i'_n]\neq[i_0 \cdots i_n]$ is another nonempty cylinder set, then the corresponding set $U'$ is disjoint from $U$.
\end{enumerate}
\end{lemma}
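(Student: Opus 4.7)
The plan is to prove (a) by induction on $n$ and then obtain (b) as a short consequence of the Markov structure.

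For (a), the base case $n=0$ is immediate: $U = i_0$ is a nonempty open interval by the definition of $\mathcal{I}$, and $f^0 = \mathrm{id}$ is a homeomorphism onto itself. For the inductive step, I would factor out the first coordinate and set $V = i_1 \cap f^{-1}(i_2) \cap \cdots \cap f^{-(n-1)}(i_n)$, noting that the cylinder $[i_1 \cdots i_n]$ remains nonempty (it is the shift image of $[i_0 i_1 \cdots i_n]$ together with irreducibility of $M$, or just the fact that if $x\in U$ then $f(x)\in V$). By induction $V$ is a nonempty open interval mapped homeomorphically onto $i_n$ by $f^{n-1}$. Nonemptiness of $[i_0 i_1 \cdots i_n]$ gives $m_{i_0 i_1}=1$, so by~\eqref{mij} we have $f(i_0)\supset i_1\supset V$. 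Since $f|_{i_0}$ is continuous and strictly monotone on the open interval $i_0$, it admits a continuous, strictly monotone inverse branch defined on all of $f(i_0)$, hence on $V$. The preimage $(f|_{i_0})^{-1}(V)\subset i_0$ is therefore a nonempty open interval that coincides with $U = i_0 \cap f^{-1}(V)$. Composing the two homeomorphisms $f|_U: U\to V$ and $f^{n-1}|_V: V\to i_n$ yields that $f^n$ maps $U$ homeomorphically onto $i_n$, closing the induction.

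For (b), I would pick any index $k$ with $i_k\neq i'_k$. By part (a), every $x\in U$ satisfies $f^k(x)\in i_k$ while every $x\in U'$ satisfies $f^k(x)\in i'_k$. Distinct elements of the partition $\mathcal{X}$ are disjoint by construction, so $i_k\cap i'_k=\emptyset$, which forces $U\cap U'=\emptyset$.

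The argument is essentially routine bookkeeping; the only point that deserves care, and is the genuine ``hard part,'' is the use of the Markov condition $f(i_0)\supset i_1\supset V$ in the inductive step. This containment is what guarantees that the inverse branch of $f|_{i_0}$ is defined on the \emph{whole} of $V$, so that $U$ is a single open interval sitting inside $i_0$ rather than some disconnected or empty piece. Without the Markov property one would only get $f(i_0)\cap i_1$ possibly proper in $i_1$, and the inductive identification of $U$ with a homeomorphic preimage of $i_n$ would break down.
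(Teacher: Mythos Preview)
Your proof is correct and follows essentially the same approach as the paper's: induction on the cylinder length for (a), applying the hypothesis to $[i_1\cdots i_n]$ and pulling back the interval $V$ under the monotone branch $f|_{i_0}$, and for (b) choosing an index where the symbols differ. Your write-up is slightly more explicit about why the Markov containment $f(i_0)\supset i_1\supset V$ is needed, but the argument is the same.
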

\begin{proof}
\ref{lem:symbolic_a}: By induction in the length of the cylinder. The base case $n=0$ is clear. Given $[i_0\cdots i_n]$, the induction hypothesis applied to $[i_1\cdots i_n]$ gives us a nonempty open interval $V=i_1 \cap f^{-1}(i_2) \cap \cdots f^{-n+1}(i_n)$ contained in $i_1$ and mapped homeomorphically by $f^{n-1}$ onto $i_n$. But $f(i_0) \supset i_1$ and $f|_{i_0}$ is continuous and strictly monotone. We get that $U=i_0\cap f^{-1}(V)=f|_{i_0}^{-1}(V)$ is a nonempty open interval mapped by $f$ homeomorphically onto $V$, and the result follows.

\ref{lem:symbolic_b}: If $i_j\neq i'_j$, then the sets $f^j(U)\subset i_j$ and $f^j(U')\subset i'_j$ are disjoint. Therefore $U, U'$ are disjoint.
\end{proof}

\begin{proof}[Proof of Theorem~\ref{th:cmm}]
Elements of $\Sigma_M$ are called \emph{itineraries}. Given an itinerary $\omega=i_0 i_1 \cdots$ we put $U_n(\omega)=i_0\cap f^{-1}(i_1)\cap \cdots \cap f^{-n}(i_n)$ for each $n\geq0$. We have a nested sequence of open intervals $U_0(\omega) \supset U_1(\omega) \supset \cdots$ as well as a nested sequence of closed intervals $\overline{U_0(\omega)} \supset \overline{U_1(\omega)} \supset \cdots$. Taking the intersection, we obtain a non-empty closed interval $\bigcap_{n=0}^\infty \overline{U_n(\omega)}$. This interval must be degenerate (a singleton), for otherwise its interior is a homterval for $f$. In this way we get a map
\begin{equation}\label{psi}
\psi:\Sigma_M\to[0,1], \quad \psi(\omega)=x\text{, where }\{x\}=\bigcap_{n=0}^\infty \overline{U_n(\omega)}.
\end{equation}
We say that $\omega$ is an itinerary of the point $x=\psi(\omega)$. If in~\eqref{psi} the closures are not needed so that $x\in\bigcap_{n=0}^\infty U_n(\omega)$, then we call $\omega$ a \emph{true itinerary}; otherwise we call $\omega$ a \emph{false itinerary}.

True itineraries behave well. If $\omega=i_0i_1\cdots$ is a true itinerary of the point $x$, then $f^n(x)\in i_n$ for all $n\geq0$. In particular, $x$ does not belong to the set of \emph{pre-critical points} $D=\bigcup_{n=0}^\infty f^{-n}(C)$. Conversely, each point $x\in[0,1]\setminus D$ has a true itinerary given by
\begin{equation}\label{phi}
\phi:[0,1]\setminus D \to \Sigma_M, \quad \phi(x)=i_0i_1i_2\cdots\text{, where }f^n(x)\in i_n\in\I \text{ for all }n\geq0.
\end{equation}
Even better, if $\omega$ is a true itinerary for $x$, we see immediately that $\sigma(\omega)$ is a true itinerary for $f(x)$. Thus, we get the conjugacy relation $\psi(\sigma(\omega))=f(\psi(\omega))$ for all true itineraries $\omega$.

Conversely, suppose $\omega=i_0i_1\cdots$ is an itinerary whose shift $\sigma(\omega)$ is true. Write $y=\psi(\sigma(\omega))$. Since $i_0\cap f^{-1}(i_1)$ is mapped homeomorphically by $f$ onto $i_1$, we find a point $x\in i_0\cap f^{-1}(y)$. Since $y$ was not a pre-partition point, neither is $x$, and we see immediately from~\eqref{phi} that $\omega$ is a true itinerary for $x$. We have shown that an itinerary $\omega$ is true if and only if $\sigma(\omega)$ is true.

False itineraries behave much worse. If $\omega=i_0 i_1 \cdots$ is a false itinerary of the point $x$, then there is a minimal number $n\geq0$ such that $x\notin U_n(\omega)$, called the \emph{order} of the false itinerary. Recall that $U_n(\omega)$ is an open interval contained in $i_0$ and mapped homeomorphically by $f^n$ onto $i_n$. Since $x\in\overline{U_n(\omega)}$ and $f|_{i_0}$ is continuous and $x\in i_0$ (assuming $n\geq1$), we get $f^n(x)\in\overline{i_n}$. Therefore $f^n(x)$ is an endpoint of $i_n$ (this holds also in the case $n=0$). In particular, $f^n(x)\in C$, so $x\in D$. Thus, false itineraries are only associated with pre-partition points. The conjugacy relation may not hold: if $\omega$ is a false itinerary for $x$ of order $0$, then $\sigma(\omega)$ is still a false itinerary but perhaps not for $f(x)$. This is because we have no control over the value $f(x)$ when $x\in C$ is not a continuity point for $f$.

A pre-partition point $x\in D$ may have zero, one, or two false itineraries, but not more. For let $n\geq0$ be minimal such that $f^n(x)\in C$. We have already seen that each false itinerary $\omega=i_0i_1\cdots$ for $x$ has order $n$. Thus $x\in U_{n-1}(\omega)$ and $x$ is an endpoint of $U_n(\omega)$. Because of the nesting $\overline{U_n(\omega)} \supset \overline{U_{n+1}(\omega)} \supset \cdots$, we see that $x$ is an endpoint of $U_j(\omega)$ for all $j\geq n$. By Lemma~\ref{lem:symbolic}~\ref{lem:symbolic_b}, the condition $x\in U_{n-1}(\omega)$ uniquely determines the symbols $i_0, \cdots, i_{n-1}$. There are at most two choices for $i_n$, namely, the at most two partition intervals with common endpoint $f^n(x)$. Now let $\omega'=i'_0i'_1\cdots$ be another false itinerary. We claim that if $i_0\cdots i_n=i'_0\cdots i'_n$, then $\omega=\omega'$. This follows inductively, for if $i_0\cdots i_j=i'_0\cdots i'_j$ with $j\geq n$ but $i_{j+1}\neq i'_{j+1}$, then $U_{j+1}(\omega)$ and $U_{j+1}(\omega')$ are disjoint subintervals of $U_j(\omega)=U_j(\omega')$, and all three of these intervals have $x$ as an endpoint, which is impossible.

Let $N\subset \Sigma_M$ denote the set of false itineraries. We have shown so far that $N=\psi^{-1}(D)$ and that $\phi,\psi,f,\sigma$ are related by the following commutative diagram:
\begin{equation}\label{cd}
\begin{tikzcd}[row sep=1cm, column sep=1.5cm]
\Sigma_M\setminus N \arrow[r, "\sigma"] \arrow[d, shift left=-1, "\psi" swap] & \Sigma_M\setminus N \arrow[d, shift left=-1, "\psi" swap] \\
\left[0,1\right]\setminus D \arrow[r, "f"] \arrow[u, shift left=-1, "\phi" swap] & \left[0,1\right]\setminus D \arrow[u, shift left=-1, "\phi" swap]
\end{tikzcd}
\end{equation}
We still need to show that
\begin{enumerate}[label=(\roman*)]
\item\label{it:cd1} $N$ and $D$ are countable and totally invariant, and
\item\label{it:cd2} $\psi:\Sigma_M\setminus N \to [0,1]\setminus D$ and its inverse $\phi$ are both measurable with respect to the Borel $\sigma$-algebras.
\end{enumerate}

We start with~\ref{it:cd1}. Since $f$ is at most countable-to-one and $C$ is countable, it follows that $D=\cup_{n=0}^\infty f^{-n}(C)$ is countable. Backward invariance $f^{-1}(D)\subset D$ is clear from construction, and forward invariance $f(D)\subset D$ is inherited from $C$. Therefore $D$ is totally invariant. Since $N=\psi^{-1}(D)$ and $\psi$ is at most two-to-one, it follows that $N$ is countable. We already showed that an itinerary $\omega$ is true if and only if $\sigma(\omega)$ is true. Thus, the set of false itineraries $N$ is also totally invariant.

To prove~\ref{it:cd2}, it suffices to show continuity of the maps $\psi:\Sigma_M\to[0,1]$ and $\phi:[0,1]\setminus D \to \Sigma_M$. To see that $\psi$ is continuous at a given point $\omega=i_0i_1\cdots$, let $\epsilon>0$ be given. Since the nested intersection in~\eqref{psi} contains only the one point $x=\psi(\omega)$, it follows that there is $n\geq0$ such that $\overline{U_n(\omega)}$ is contained in the $\epsilon$-neighborhood of $x$. But the cylinder set $[i_0\cdots i_n]$ is an open neighborhood of $\omega$ mapped by $\psi$ into $\overline{U_n(\omega)}$.

To see that $\phi$ is continuous consider any nonempty cylinder set $[i_0\cdots i_n]\subset\Sigma_M$ and define $U$ as in Lemma~\ref{lem:symbolic}~\ref{lem:symbolic_a}. Then $U$ is open in $[0,1]$, so $U\setminus D$ is an open subset of $[0,1]\setminus D$ in the subspace topology. But $U\setminus D=\phi^{-1}([i_0\cdots i_n])$. We have shown that each cylinder set has an open preimage under $\phi$, and since cylinder sets form a basis for the topology on $\Sigma_M$, we are done.
\end{proof}


Now we explore the consequences of our isomorphism theorem for our interval map.

\begin{theorem}\label{th:cmmfair}
Let $f$ be countably Markov and mixing with transition matrix $M$. Form $Q$ using~\eqref{cj} and~\eqref{Q}. Let $\Acc(C)$ denote the set of accumulation points of $C$.
\begin{enumerate}[label=(\alph*)]
\item\label{th:cmmfair_a} If $Q$ is positive recurrent, then
    $f$ has exactly one non-atomic fair measure $\mu$. It is ergodic and has full support.
    For any non-partition point $y_0\in[0,1]\setminus C$, a randomly chosen backward trajectory $(y_n)$ equidistributes for $\mu$, and the geometric averages $\sqrt[n]{c(y_0)\cdots c(y_{n-1})}$ converge to $\exp h_\mu(f)$.
\item\label{th:cmmfair_b} If $Q$ is null recurrent, then
    $f$ does not have any non-atomic fair measure.
    For any non-partition point $y_0\in[0,1]\setminus C$, a randomly chosen backward trajectory $(y_n)$ is dense in $[0,1]$, but visits each set $E$ bounded away from $\Acc(C)$ with limiting frequency zero.
\item\label{th:cmmfair_c} If $Q$ is transient, then
    $f$ does not have any non-atomic fair measure.
    For any non-partition point $y_0\in[0,1]\setminus C$, a randomly chosen backward trajectory $(y_n)$ converges to $\Acc(C)$.
\end{enumerate}
\end{theorem}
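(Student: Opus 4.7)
The plan is to lift each claim through the isomorphism $\psi:\Sigma_M\setminus N\to[0,1]\setminus D$ of Theorem~\ref{th:cmm}, invoke the shift-side result from Section~\ref{sec:cms}, and translate back using the topology of the Markov partition $\X$. To get the measure-theoretic part under way, note that $\mu_\sigma=\textnormal{Markov}(\pi,P)$ is non-atomic: by Proposition~\ref{prop:atoms} any atom of a fair measure lies on a totally invariant periodic orbit, and irreducibility of $M$ forbids such orbits (an orbit with $c=1$ everywhere would be unreachable from outside). Combining Corollary~\ref{cor:cms} with the bijection of Theorem~\ref{th:isom}, the pushforward $\mu=\psi_\ast\mu_\sigma$ is in case~(a) the unique non-atomic fair measure on $[0,1]$, ergodic by conjugacy with entropy $h_\mu(f)=h_{\mu_\sigma}(\sigma)$; in (b) and (c) the same bijection rules out non-atomic fair measures on $[0,1]$. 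Full support of $\mu$ in (a) follows because every nonempty open $V\subset[0,1]$ contains some cylinder interval $U_k(i_0\cdots i_k)$ --- pick $x\in V\setminus D$ and shrink a prefix of $\phi(x)$ until its cylinder fits inside $V$ --- and this interval has positive measure $\mu_\sigma([i_0\cdots i_k])>0$.

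The backward-trajectory claims rest on a single observation: for any $y_0\in[0,1]\setminus C$, the sequence $(j_n)$ of partition intervals of a random backward trajectory is a Markov chain on $\Sigma_{M^T}$ with transition matrix $Q$ and initial distribution $\delta_{j_0}$, in exact parallel with the shift construction in the proof of Theorem~\ref{th:cms}. Moreover, cylinder-interval membership is purely symbolic: $y_n\in U_k(i_0\cdots i_k)$ is equivalent to $\sigma^{n-k}(\omega')\in[i_k\cdots i_0]\subset\Sigma_{M^T}$. In case (a), Birkhoff's theorem on $(\Sigma_{M^T},\nu,\sigma)$ restricted to $[j_0]$ --- the same device used in the proof of Theorem~\ref{th:cms}(a) --- makes cylinder-interval visit frequencies converge almost surely to $\nu([i_k\cdots i_0])=\mu(U_k(i_0\cdots i_k))$, and the weak-* convergence $\frac1N\sum_{n=0}^{N-1}\delta_{y_n}\to\mu$ then follows by approximating an arbitrary open subset of $[0,1]$ from below by finite unions of cylinder intervals. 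The geometric-average statement combines Corollary~\ref{cor:cms2} with the identity $c(y_n)=c_{j_n}$ and the entropy preservation provided by $\psi$.

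For (b) and (c), the shift-side visit statements of Theorem~\ref{th:cms}(b),(c) translate directly into statements about $(j_n)$ visiting individual partition intervals. The topological bridge I would prove first is the lemma: \emph{if $E\subset[0,1]$ is bounded away from $\textnormal{Acc}(C)$, then $E$ meets only finitely many partition intervals}; this is a compactness argument, since otherwise a small open neighborhood of $E$ would contain infinitely many partition points, and these would accumulate at a point of $\textnormal{Acc}(C)$ arbitrarily close to $E$. In the null-recurrent case this lemma, together with the fact that cylinder intervals form a topological basis modulo the countable set $D$, gives density of $(y_n)$ in $[0,1]$ (via visits of $(j_n)$ to every cylinder of $\Sigma_{M^T}$) and reduces the frequency-zero assertion to a finite sum of zero-frequency contributions. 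In the transient case, for any open $V\supset\textnormal{Acc}(C)$ the complement $[0,1]\setminus V$ meets only finitely many partition intervals, each visited finitely often by $(j_n)$, so $y_n\in V$ eventually.

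The main obstacle I anticipate is the equidistribution in (a) when $y_0\in D\setminus C$. The entire backward trajectory then lies in the countable set $D\setminus C$ yet must equidistribute to the non-atomic measure $\mu$, and the clean two-sided symbolic realization used in the proof of Theorem~\ref{th:cms} is unavailable because $y_0$'s forward itinerary breaks down once its orbit hits $C$. My plan is to work purely with backward symbolic data: the chain $(j_n)$ still has distribution $\nu_0=\textnormal{Markov}(\delta_{j_0},Q)$ on $\Sigma_{M^T}$ and the cylinder-visit characterization remains valid, so Birkhoff still applies --- but the bookkeeping that cleanly separates backward symbolic data from the (now ill-defined) forward data of $y_0$ is the technical heart of the argument.
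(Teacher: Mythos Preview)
Your approach is essentially the paper's: lift the shift-side results of Theorem~\ref{th:cms} and Corollaries~\ref{cor:cms},~\ref{cor:cms2} through the isomorphism of Theorem~\ref{th:cmm}, use continuity of $\phi,\psi$ for the topological statements, and invoke the finite-cover property of sets bounded away from $\Acc(C)$ for parts~(b) and~(c). The paper's proof is in fact terser than yours and does not isolate the case $y_0\in D\setminus C$ at all.

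The obstacle you flag is not a genuine one, and your own backward-chain formulation already dissolves it. Forward invariance of $C$ forces the entire backward trajectory of any $y_0\in[0,1]\setminus C$ to remain in $[0,1]\setminus C$, so the symbol sequence $(j_n)_{n\ge0}$ is well defined with law $\textnormal{Markov}(\delta_{j_0},Q)$ regardless of whether $y_0\in D$. For $n\ge k$ the event $y_n\in U_k(i_0\cdots i_k)$ is determined by $(j_{n-k},\ldots,j_n)$ alone, so the finitely many indices $n<k$ where the forward itinerary of $y_0$ might be needed do not affect limiting frequencies. There is no additional bookkeeping to do; the argument you sketched in the second paragraph already covers every $y_0\in[0,1]\setminus C$.
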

\begin{proof}
Most of these results follow immediately from Theorem~\ref{th:cms} and its corollaries together with our isomorphism result Theorem~\ref{th:cmm}. It is also critical to note that the maps $\phi,\psi$ in~\eqref{cd} are continuous. Therefore if $U\subset[0,1]$ is open, then there is an open set $V\subset\Sigma_M$ whose symmetric difference with $\psi^{-1}(U)$ is contained in the countable invariant set $N$. This gives us the full support result in~\ref{th:cmmfair_a}, because the fair measure $\text{Markov}(\pi,P)$ on $\Sigma_M$ has full support. It also gives us the density result in~\ref{th:cmmfair_b} because $y_n\in U$ if and only if $\phi(y_n)\in V$.

Since $C$ is closed and countable, so is its set of accumulation points $\Acc(C)$. Now if $E\subset[0,1]$ is bounded away from this set, i.e. $\overline{E}\cap\Acc(C)=\emptyset$, then $E$ is contained in a finite union of partition intervals $i_1\cup \cdots \cup i_n$. Then $y_n$ cannot visit $E$ unless $\phi(y_n)$ is in the corresponding union of cylinders $[i_1]\cup\cdots\cup[i_n]$. This gives us the frequency of visits to $E$ in part~\ref{th:cmmfair_b}. It also gives us the convergence of $y_n$ to $\Acc(C)$ in part~\ref{th:cmmfair_c}, where convergence means that the distance between $y_n$ and the nearest point of $\Acc C$ goes to zero.
\end{proof}

\section{Lebesgue fair models.}

\begin{definition}
An interval map is called \emph{Lebesgue fair} if Lebesgue measure is fair for it. A \emph{Lebesgue fair model} for an interval map $f$ is a Lebesgue fair map $g$ conjugate to $f$ by a monotone increasing homeomorphism $\phi:[0,1]\to[0,1]$, $g\circ\phi=\phi\circ f$.
\end{definition}


The main result of this section is a construction of the Lebesgue fair models for countably Markov and mixing interval maps. In a sense, it allows us to visualize the fair measures which we found. We start with two easy lemmas.

\begin{lemma}\label{lem:fmfm}
The Lebesgue fair models for $f$ are in bijective correspondence with the non-atomic fair measures of full support.
\end{lemma}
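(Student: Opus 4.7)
The plan is to exhibit the bijection explicitly via cumulative distribution functions. Given a non-atomic fair measure $\mu$ of full support for $f$, define $\phi_\mu(x)=\mu([0,x])$. Non-atomicity makes $\phi_\mu$ continuous, full support makes it strictly increasing, and $\phi_\mu(0)=0$, $\phi_\mu(1)=1$; so $\phi_\mu$ is a monotone increasing homeomorphism of $[0,1]$. Set $g_\mu=\phi_\mu\circ f\circ\phi_\mu^{-1}$; checking the definition on half-open intervals shows $(\phi_\mu)_*\mu=\Leb$.

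In the reverse direction, given a Lebesgue fair model $(g,\phi)$ for $f$, define $\mu_{(g,\phi)}=(\phi^{-1})_*\Leb$. Because $\phi$ is a monotone increasing homeomorphism of $[0,1]$, this measure is non-atomic, has full support, and has distribution function $x\mapsto \Leb([0,\phi(x)])=\phi(x)$.

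The main step is to verify that these constructions produce valid objects in the target sets, which reduces to showing that fairness transfers under topological conjugacy by $\phi_\mu$. If $\{X_i\}$ is a partition satisfying the hypotheses of Definition~\ref{defnFair} for $f$, then $\{\phi_\mu(X_i)\}$ is such a partition for $g_\mu$, and because $\phi_\mu$ is a Borel bijection the associated objects $p(B)$ and $c(B)$ are carried faithfully from one system to the other. Pushing equation~\eqref{fair} through $\phi_\mu$ then converts fairness of $\mu$ for $f$ into fairness of $\Leb$ for $g_\mu$, and pulling back does the reverse; the partition-independence remark after Definition~\ref{defnFair} lets us use whichever partition is convenient for each system.

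Finally, the two constructions are mutually inverse. Starting from $\mu$, building $(g_\mu,\phi_\mu)$, and then forming $(\phi_\mu^{-1})_*\Leb$ recovers a measure with distribution function $\phi_\mu$, hence equal to $\mu$. Starting from $(g,\phi)$ and passing to $\mu_{(g,\phi)}$, its distribution function is $x\mapsto\Leb([0,\phi(x)])=\phi(x)$, so the rebuilt homeomorphism is $\phi$ and the rebuilt model map is $g$. The main obstacle, as noted, is the bookkeeping that shows fairness is preserved under conjugation; once that is handled, the rest is routine verification of the distribution-function correspondence.
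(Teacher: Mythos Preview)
Your proof is correct and follows essentially the same route as the paper: both directions of the bijection are built from the cumulative distribution function $\phi_\mu(x)=\mu([0,x])$ and its inverse construction $\mu_\phi=\Leb\circ\phi$, and mutual inversion is checked on distribution functions. The only minor difference is that the paper transfers fairness by invoking the Jacobian discussion from Section~\ref{sec:iso} (a topological conjugacy is in particular a measure-theoretic isomorphism everywhere, so $J_{\mu_1}=J_{\mu_2}\circ\phi$ and $c_1\circ f_1=c_2\circ f_2\circ\phi$ match up), whereas you verify Definition~\ref{defnFair} directly by pushing the partition $\{X_i\}$ through $\phi_\mu$; this is a presentational choice rather than a different argument.
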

\begin{proof}
Let $\mu$ be such a measure and define $\phi_\mu:[0,1]\to[0,1]$ by $x\mapsto\mu([0,x])$. This is a monotone increasing homeomorphism with $\mu\circ\phi_\mu^{-1}$ equal to the Lebesgue measure $\Leb$. Thus $\phi_\mu : ([0,1],f,\mu) \to ([0,1],g,\Leb)$ is a measure theoretic isomorphism everywhere, i.e., without the removal of measure zero sets. Therefore $\lambda$ is fair for $g$ (see Section~\ref{sec:iso}).

Conversely, let $g$ be a Lebesgue fair model for $f$ with conjugating homeomorphism $\phi$. Then $\mu_\phi=\Leb\circ\phi$ is a Borel probability measure and $\phi:([0,1],f,\mu_\phi)\to([0,1],g,\Leb)$ is a measure theoretic isomorphism everywhere. It follows that $\mu_\phi$ is a non-atomic fully supported fair measure for $f$. 

Since the operations $\mu\mapsto\phi_\mu$ and $\phi\mapsto\mu_\phi$ are clearly inverse to each other, we've found the desired bijective correspondence.
\end{proof}


\begin{lemma}\label{lem:determined}
A homeomorphism $g:(a,b)\to(c,d)$ is uniquely determined by its orientation (increasing or decreasing) and the Jacobian for Lebesgue measure, provided that Lebesgue measure is non-singular for $g$.
\end{lemma}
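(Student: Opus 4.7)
The plan is to recover $g$ explicitly from its Jacobian by applying the defining equation~\eqref{Jac} to the family of intervals $(a,x]$ for $x \in (a,b)$. Since $g$ is a homeomorphism, it is globally injective, so~\eqref{Jac} applies to every Borel set $B\subset(a,b)$ with $\mu = \Leb$, and the non-singularity assumption ensures the Jacobian $J$ exists (unique up to $\Leb$-a.e.\ equality).

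First I would handle the orientation-preserving case. Since $g$ is an increasing homeomorphism from $(a,b)$ onto $(c,d)$, it sends each subinterval $(a,x]$ onto $(c, g(x)]$. Applying~\eqref{Jac} gives
\begin{equation*}
g(x) - c \;=\; \Leb\bigl(g((a,x])\bigr) \;=\; \int_{(a,x]} J \, d\Leb,
\end{equation*}
so $g(x) = c + \int_a^x J(t)\, dt$. This formula determines $g$ uniquely on $(a,b)$ from the data $(c, J)$, and the endpoints $a,c$ are themselves forced by the hypothesis that $g$ is a homeomorphism $(a,b)\to(c,d)$.

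The decreasing case is symmetric: $g((a,x]) = [g(x), d)$, so the same computation yields $g(x) = d - \int_a^x J(t)\, dt$. In both cases the right-hand side is well defined independently of the representative chosen for $J$, so the expression for $g$ depends only on the equivalence class of $J$ modulo $\Leb$-null sets.

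There is no substantive obstacle here; the only point worth flagging is that although $J$ is defined only up to $\Leb$-a.e.\ equality, the resulting integral representation automatically produces a continuous function agreeing with the given $g$ at every point, since $g$ is continuous and both sides agree on the dense (indeed total) family of half-open initial subintervals. Hence $g$ is recovered pointwise from its orientation and Jacobian.
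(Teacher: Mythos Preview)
Your proof is correct and follows essentially the same approach as the paper: apply the Jacobian identity~\eqref{Jac} to the initial interval $B=(a,x)$ (you use $(a,x]$, which differs only by a Lebesgue-null point) to recover $g(x)=c+\int_B J\,d\Leb$ in the increasing case. The paper treats only the increasing case and omits your extra remarks about the decreasing case and the a.e.\ ambiguity of $J$, but these are minor elaborations rather than a different method.
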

\begin{proof}
Assume $g$ is an increasing homeomorphism. Given $x\in(a,b)$, put $B=(a,x)$ so that $g(B)=(c,g(x))$. By~\eqref{Jac} we get $g(x)=c+\int_B J\,d\Leb$, where $J$ is the Jacobian and $\Leb$ is the Lebesgue measure.
\end{proof}

Now let $f$ be countably Markov and mixing with transition matrix $M$. If $f$ falls into case~\ref{th:cmmfair_b} or~\ref{th:cmmfair_c} of Theorem~\ref{th:cmmfair}, then there are no Lebesgue fair models. But in case~\ref{th:cmmfair_a}, we see that there is a unique Lebesgue fair model $g$, and we would like to know what it looks like.

Here is a construction for the graph of $g$. On the horizontal axis we draw the sets $\phi(i\cap f^{-1}(j))$, $i,j\in\X$, $i\cap f^{-1}(j)\neq\emptyset$, where $\phi=\phi_\mu$ is the conjugacy given by Lemma~\ref{lem:fmfm} using the unique non-atomic fair measure $\mu$ for $f$. Thankfully, there is no need to calculate $\phi$ explicitly; it suffices to calculate $\pi$ and $P$ from~\eqref{pi} and~\eqref{P}. We know that $\phi(i\cap f^{-1}(j))$ is an open interval if $i,j\in\I$ and a singleton otherwise. We also know that these sets form a partition. And we know exactly where to draw these sets in $[0,1]$ because we know their ordering ($\phi$ preserves ordering) and their lengths (denoted $\len$) $$\len(\phi(i\cap f^{-1}(j)))=\mu(i\cap f^{-1}(j)) = \begin{cases}\pi_i p_{ij},&\text{if }i,j\in\I\\0,&\text{otherwise}\end{cases}.$$

On the vertical axis we draw the partition sets $\phi(j)$, $j\in\X$. We get open intervals when $j\in\I$ and singletons otherwise. Again, we know exactly where to draw these sets because we know their ordering and their lengths $$\len(\phi(j))=\mu(j)=\begin{cases}\pi_j,&\text{if }j\in\I\\0,&\text{otherwise}\end{cases}.$$

Now we fill in the graph of $g$. If $\phi(i\cap f^{-1}(j))$ is a singleton, then $g$ maps this point to the single element of $\phi(j)$. If $\phi(i\cap f^{-1}(j))$ is an interval, then $g$ carries this interval homeomorphically onto $\phi(j)$. The Jacobian here for the fair (non-singular) Lebesgue measure is the constant $c_j=\#f^{-1}(x)$, $x\in j$. So by Lemma~\ref{lem:determined}, this piece of $g$ is affine with slope $\pm c_j$, with the plus or minus determined by the orientation of $f|_i$. Notice that this slope agrees with the lengths of these intervals, because $$\frac{\pi_j}{\pi_i p_{ij}} = \frac{\pi_j}{\pi_j q_{ji}} = \frac{c_j}{m_{ij}}=c_j.$$

We may summarize our construction as a theorem.

\begin{theorem}\label{th:fairmodel}
Let $f$ be a countably Markov and mixing interval map whose associated Markov shift $\Sigma_M$ has a fair measure $\textnormal{Markov}(\pi,P)$. Then the unique Lebesgue fair model for $f$ is the piecewise affine map $g$ defined as follows: For each pair $i,j\in\X$ with $i\cap f^{-1}(j)\neq\emptyset$,
if $i,j\in\mathcal{I}$, then $g$ maps $(x,x')$ affinely onto $(y,y')$ with the same orientation as $f|_i$, and
if $j=\{c\}$ is a singleton, then $g$ maps $x$ to $y$, where
\begin{gather*}
x=\sum_{\mathclap{(k,l)\in W_{ij}}} \pi_k p_{kl}, \qquad x'=x+\pi_i p_{ij}, \qquad y=\sum_{l\in W_j}\pi_l, \qquad y'=y+\pi_j,\\
W_{ij}=\left\{ (k,l)\in\I\times\I ;~ \emptyset \neq k\cap f^{-1}(l) \text{ lies to the left of }i\cap f^{-1}(j)\right\}\\
W_j=\left\{ l\in\I ;~ l \text{ lies to the left of }j\right\}.
\end{gather*}
\end{theorem}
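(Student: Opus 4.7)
The plan is to invoke Lemma~\ref{lem:fmfm} to realize the unique Lebesgue fair model as $g = \phi_\mu \circ f \circ \phi_\mu^{-1}$, where $\phi_\mu(x) = \mu([0,x])$, and then read off its breakpoints and slopes from the Markov data $(\pi, P)$. Existence and uniqueness of $\mu$ are guaranteed: since $\Sigma_M$ admits a fair measure we are in the positive recurrent case of Corollary~\ref{cor:cms}, and case (a) of Theorem~\ref{th:cmmfair} then provides a unique non-atomic fair measure on $[0,1]$ with full support. This hands us $g$ for free; the work is in verifying the explicit description.

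First I would compute the $\mu$-measures of the relevant pieces. Transporting the fair Markov measure on $\Sigma_M$ through the isomorphism $\psi$ of Theorem~\ref{th:cmm} (which is legitimate by Theorem~\ref{th:isom}), we get $\mu(j) = \pi_j$ for each $j \in \I$ and $\mu(i \cap f^{-1}(j)) = \pi_i p_{ij}$ for each pair $i, j \in \I$, while the countably many singleton pieces have measure zero by non-atomicity. Because $\phi_\mu$ is an order-preserving homeomorphism with $\len(\phi_\mu(A)) = \mu(A)$, the images $\phi_\mu(i \cap f^{-1}(j))$ partition $[0,1]$ (into intervals when $i, j \in \I$, into singletons otherwise) in the same left-to-right order as the original pieces. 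Summing the measures of all pieces lying to the left of a chosen piece yields the stated formulas for $x$ and $y$, and adding the measure of the piece itself yields $x'$ and $y'$.

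The main step is to show that $g$ is affine with slope $\pm c_j$ on each interval piece $\phi_\mu(i \cap f^{-1}(j))$. Since $\mu$ is fair, Lemma~\ref{lem:Jac} identifies $J_\mu = c \circ f$; transporting through the everywhere-defined conjugacy $\phi_\mu$ shows that the Jacobian of Lebesgue measure for $g$ on this piece is the constant $c_j$ (as $f$ carries this piece into $j$). Orientation is preserved by the increasing map $\phi_\mu$, so $g$ has the same orientation as $f|_i$ there, and Lemma~\ref{lem:determined} then forces $g$ to be affine with slope $\pm c_j$. Singleton pieces collapse to points, for which $g$ is determined trivially by the conjugacy.

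Rather than a single deep obstacle, the real care needed is bookkeeping around the countable invariant sets $C$ and $D$, which thanks to Theorem~\ref{th:isom} is harmless; every measure identity used transfers cleanly between $\Sigma_M$ and $[0,1]$. A final internal consistency check is the identity
\[
\frac{\pi_j}{\pi_i p_{ij}} = \frac{\pi_j}{\pi_j q_{ji}} = \frac{c_j}{m_{ij}} = c_j,
\]
where we use $\pi_i p_{ij} = \pi_j q_{ji}$ from~\eqref{P} and $q_{ji} = 1/c_j$ from~\eqref{Q}; this confirms that the slope $c_j$ produced by the Jacobian argument is compatible with the endpoint lengths produced by the measure computation, closing the proof.
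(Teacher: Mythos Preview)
Your proposal is correct and follows essentially the same route as the paper: invoke Lemma~\ref{lem:fmfm} and Theorem~\ref{th:cmmfair}\ref{th:cmmfair_a} to get the unique Lebesgue fair model $g=\phi_\mu\circ f\circ\phi_\mu^{-1}$, compute the lengths $\mu(i\cap f^{-1}(j))=\pi_i p_{ij}$ and $\mu(j)=\pi_j$ via the isomorphism of Theorem~\ref{th:cmm}, and then use the constant Jacobian $c_j$ together with Lemma~\ref{lem:determined} to conclude that each piece of $g$ is affine with slope $\pm c_j$. The paper's own argument (given in the paragraphs preceding the theorem statement) is organized identically, including the same consistency check $\pi_j/(\pi_i p_{ij})=c_j$.
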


In this way, we have constructed the Lebesgue fair model $g$ for $f$ using only combinatorial information.
Incidentally, this shows that there is quite a lot of flexibility in the definition of the function $f$ -- as long as we keep the right Markov structure and the mixing property, we automatically get a topological conjugacy to the same Lebesgue fair model $g$.

\vspace{0.2in}

\begin{example}
Applying the results of section \ref{LIM}, especially Theorem \ref{th:cmmfair}, we can associate interval maps with shift spaces. The next example is just one of a pile of transitive mappings which are a lot like a random walk. We choose one which has all partition intervals the same length. One partition interval will cover itself and another 4 partition intervals when the map there is increasing or another 2 intervals when it decreases.  Here are an exact formula for $f(x)$, a picture, and also the corresponding matrices $M$ and $Q$ with the main diagonals shown in bold:

\begin{minipage}[c][0.55cm][c]{.55\textwidth} 
\begin{equation*}\label{53}
f(x) = \begin{cases}
5x-8n-2, & {\rm if} ~ x \in I_{2n},\\\\
-3x+8n+6, & {\rm if} ~ x \in I_{2n+1} ,\\
\end{cases}
\end{equation*}\\[1em]
\centering
where $n\in \mathbb Z$ and $I_k= \langle k, k +1\rangle$.
\end{minipage}%
\begin{minipage}[c][6cm][c]{.45\textwidth}
\includegraphics[width=.7\textwidth]{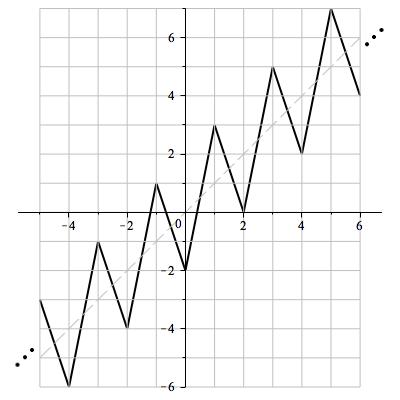}
\end{minipage}
\vspace{-1em}
\begin{equation*}
{\small
\setcounter{MaxMatrixCols}{20} 
M =
\begin{bmatrix}
 \ddots &  \vdots & \vdots & \vdots & \vdots &  \vdots  & \vdots & \vdots  & \reflectbox{$\ddots$} \\
  \cdots& \mathbf{1} & 1 & 1 & 0 & {0} & 0 & 0   & \cdots \\
 \cdots& 1 & \mathbf{1} & 1 & 0 & {0} & 0 & 0   & \cdots \\
\cdots& 1 & 1 & \mathbf{1} & 1 & {1} & 0 & 0   & \cdots \\
\cdots& 0 & 0 & 1 & \mathbf{1} & {1} & 0 & 0   & \cdots \\
\cdots & 0 & 0 & 1 & 1 & \mathbf{1} & 1 & 1 &  {\cdots} \\

\cdots & 0 & 0& 0 & 0 & {1} & \mathbf{1} & 1   & \cdots \\
\cdots & 0 & 0& 0 & 0 & {1} & 1 &\mathbf{1}   &  \cdots \\
\reflectbox{$\ddots$} & \vdots & \vdots & \vdots & \vdots& \vdots& \vdots  & \vdots  &\ddots
\end{bmatrix}
,\quad
\renewcommand{\arraystretch}{1.2}
Q=
\begin{bmatrix}
 \ddots &  \vdots & \vdots & \vdots & \vdots & \vdots & \vdots & \vdots & \vdots & \reflectbox{$\ddots$} \\
\cdots& \frac15& \frac15 &  \mathbf{\frac15}  & \frac15 & {\frac15} & 0 & 0 & 0 & \cdots \\
\cdots & 0 & 0 & \frac13 &\mathbf{\frac13}  & {\frac13} & 0 & 0 & 0& \cdots \\
\cdots & 0 & 0  & \frac15 & \frac15 & \mathbf{\frac15} & \frac15 & \frac15 & 0 & \cdots \\
\cdots &  0 & 0 & 0  & 0 & {\frac13}  & \mathbf{\frac13}  & \frac13 & 0 & \cdots \\
\cdots & 0 & 0 & 0 & 0 & {\frac15} & \frac15 &  \mathbf{\frac15}  & \frac15  &\cdots \\
\cdots &  0& 0& 0 & 0  & {0} & 0 & \frac13 & \mathbf{\frac13}  & \cdots \\
\reflectbox{$\ddots$} &  \vdots & \vdots & \vdots& \vdots & \vdots & \vdots & \vdots& \vdots &\ddots
\end{bmatrix}.
}
\end{equation*}
The map $f$ has no totally invariant periodic orbits, so if there are any fair measures, they must be non-atomic (see Proposiiton~\ref{prop:atoms}).
It is not so hard to check that $\pi = (\cdots 3 \, 5 \, 3\,  5\,  3 \cdots)$, where $\pi_{2i} = 5, \pi_{2i+1} = 3$, satisfies the formula $\pi Q=\pi$. But even if we rescale $\pi,$ $ \sum_{i\in \mathbb Z} \pi_i = \infty$. Therefore, our $\pi$ cannot satisfy formula (\ref{pi}) and  by \cite[Section XV.11]{Fel} there is no summable solution, and so the stochastic matrix is not positive recurrent and there is no fair measure for $f$.

\end{example}

\begin{example}\label{ex:BruinTodd}
Bruin and Todd studied the thermodynamic formalism for a countably piecewise linear interval map $f_{\lambda}:[0,1] \rightarrow [0,1]$, as defined in~\eqref{BT} below. To be complete, at the partition points $C=\{0,1,\lambda,\lambda^2,\ldots\}$ we make $f_\lambda$ continuous from the left, and we put $f_\lambda(0)=0$. All choices of the parameter $\lambda\in(0,1)$ give a map from the same topological conjugacy class. There is no measure of maximal entropy, and the topological entropy (supremum of entropies of invariant probability measures) is equal to $\log 4$, see~\cite{BT}. \\[-1em]
\begin{minipage}[c][6cm][c]{.6\textwidth} 
\begin{equation}\label{BT}
f_{\lambda}(x) = \begin{cases}
\dfrac{x- \lambda}{1- \lambda}, & {\rm if} ~ x \in i_1,\\[1.5em]
\dfrac{x- \lambda^n}{\lambda(1- \lambda)}, & {\rm if} ~ x \in i_n, n \geq 2,\\
\end{cases}
\end{equation}\\[.5em]
\centering
where $i_n = ( \lambda^n, \lambda^{n-1})$
\end{minipage}%
\begin{minipage}[c][6cm][c]{.4\textwidth}
\scalebox{.9}{
\includegraphics[width=.8\textwidth]{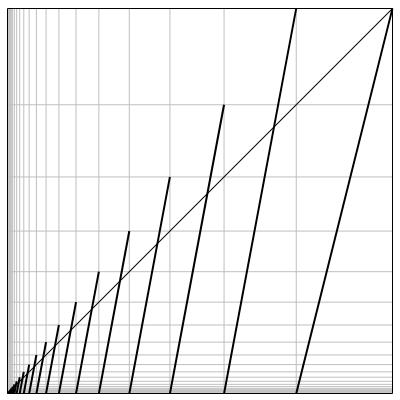}
\begin{picture}(0,0)
\put(-30,-10){$W_1$}
\put(-60,-10){$W_2$}
\put(-90,-10){$\cdots$}
\end{picture}
}
\end{minipage}

We want to calculate the non-atomic fair measures for this map. Therefore we write down the transition matrix $M$ and the corresponding stochastic matrix $Q$,
\begin{equation*}
M =
\begin{bmatrix}
1 & 1 & 1 & 1 & \cdots \\
1 & 1 & 1 & 1 & \cdots \\
0 & 1 & 1 & 1 & \cdots \\
0 & 0 & 1 & 1 & \cdots \\
0 & 0 & 0 & 1 & \cdots \\
\vdots & \vdots & \vdots & \vdots & \ddots
\end{bmatrix}
,\qquad
\renewcommand{\arraystretch}{1.2}
Q=
\begin{bmatrix}
\tfrac12 & \tfrac12 & 0 & 0 & 0 & \cdots \\
\tfrac13 & \tfrac13 & \tfrac13 & 0 & 0 & \cdots \\
\tfrac14 & \tfrac14 & \tfrac14 & \tfrac14 & 0 & \cdots \\
\tfrac15 & \tfrac15 & \tfrac15 & \tfrac15 & \tfrac15  & \cdots \\
\vdots & \vdots & \vdots & \vdots & \vdots & \ddots
\end{bmatrix}
.
\end{equation*}
The reader can easily check that
$\pi=\frac1e \begin{bmatrix} \frac{1}{0!} \frac{1}{1!} \frac{1}{2!} \frac{1}{3!} \cdots \end{bmatrix}$
satisfies~\eqref{pi}. Therefore $Q$ is positive recurrent and our interval map $f_\lambda$ has a unique non-atomic fair measure. We may calculate $P$ using~\eqref{P} and the Lebesgue fair model $g$ using Theorem~\ref{th:fairmodel}.

\begin{minipage}[c][6cm][c]{.3\textwidth} 
\begin{equation*}
\renewcommand{\arraystretch}{1.2}
P=
\begin{bmatrix}
\tfrac12 & \tfrac13 & \tfrac18 & \tfrac{1}{30} & \cdots \\
\tfrac12 & \tfrac13 & \tfrac18 & \tfrac{1}{30} & \cdots \\
0 & \tfrac23 & \tfrac28 & \tfrac{2}{30} & \cdots \\
0 & 0 & \tfrac68 & \tfrac{6}{30} & \cdots \\
0 & 0 & 0 & \tfrac{24}{30} & \cdots \\
\vdots & \vdots & \vdots & \vdots & \ddots
\end{bmatrix}
\end{equation*}
\end{minipage}\hspace{.1\textwidth}%
\begin{minipage}[c][6cm][c]{.26\textwidth}
\includegraphics[width=\textwidth]{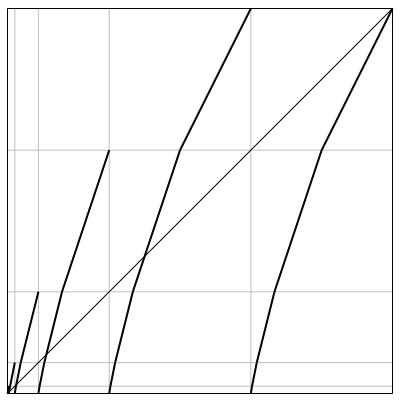}
\end{minipage}\hspace{.04\textwidth}%
\begin{minipage}[c][6cm][c]{.26\textwidth}
\centering
\small
The Lebesgue fair model. Notice that\\
$|g'(x)|=\#g^{-1}(g(x))$
Lebesgue almost everywhere.
\end{minipage}

Finally, we may use Corollary~\ref{cor:cms2} to calculate the fair entropy as the entropy of the measure $\textnormal{Markov}(\pi,P)$ on the shift space $\Sigma_M$,

\begin{equation*}
\hfair(f)=-\sum_{ij} \pi_i p_{ij} \log p_{ij} \approx \log(2.85053).
\end{equation*}
\end{example}

\section{Maps on Tame Graphs}

In this section we move beyond the interval and look for fair measures for graph maps. Our main theorem is for tame graphs, a generalization of finite graphs introduced in~\cite{BBPRV}. We start by recalling the definition.

A \emph{continuum} is a nonempty, compact, connected metric space. By $E(G)$ we denote the \emph{endpoints} of the continuum $G$, i.e. the points $x\in G$ having arbitrarily small neighborhoods $V$ with one-point boundaries $\#\partial V=1$. Similarly, by $B(G)$ we denote the \emph{branching points}, i.e. the points $x\in G$ such that any sufficiently small neighborhood $V$ of $x$ has at least three points in its boundary $\#\partial V\geq 3$. A continuum $G$ is called a \emph{tame graph} if the set $E(G)\cup B(G)$ has a countable closure.

An arc $\alpha$ in a continuum $G$ is called a \emph{free arc} if the set $\alpha^\circ = \alpha \setminus E(\alpha)$ is open in $G$. A \emph{tame partition} for $G$ is a countable family $\P$ of free arcs with pairwise disjoint interiors covering $G$ up to a countable set of points. In~\cite{BBPRV} it was shown that a continuum $G$ is a tame graph if and only if it admits a tame partition, which happens if and only if all but countably many points of $G$ have a neighborhood in $G$ which is a finite graph. It was also shown that every tame graph is locally connected (and thus a Peano continuum).

Let $g:G\to G$ be a continuous map on a tame graph. Suppose that there is a tame partition $\P$ such that
\begin{itemize}
\item For every $i\in\P$ the restriction $g|_i : i \to g(i)$ is a homeomorphism, and
\item For every pair $i,j\in\P$ if $g(i)\cap j^\circ\neq\emptyset$, then $g(i)\supset j$.
\end{itemize}
Then we will call $\P$ a \emph{countable Markov partition} for $g$. As a reminder, $g$ is called \emph{mixing} if for each pair of nonempty open sets $U,V\subset G$ there is $N\geq0$ such that for all $n\geq N$, $g^n(U)\cap V\neq\emptyset$.

Our main idea for studying tame graph maps is to cut up the graph into arcs and glue those arcs back together to get an interval. The resulting interval map will be discontinuous, but for our purposes it does not matter much.

\begin{definition}\label{def:cutandpaste}
Let $g:G\to G$ be a continuous mixing map of a tame graph with countable Markov partition $\P$. Let $\psi:\bigcup_{i\in\P}i^\circ \to [0,1]$ and $f:[0,1]\to[0,1]$ be maps such that
\begin{itemize}
\item $\psi\left(\bigcup_{i\in\P}i^\circ\right)$ has a countable complement in $[0,1]$,
\item For each $i\in\P$, $\psi$ maps $i^\circ$ homeomorphically onto its image,
\item For distinct $i,j\in\P$, $\psi(i^\circ)\cap\psi(j^\circ)=\emptyset$ (thus $\psi$ is injective), and
\item $f(x)=(\psi\circ g\circ\psi^{-1})(x)$ if this composition of maps is defined at $x$, and otherwise $f(x)\not\in\psi\left(\bigcup_{i\in\P}i^\circ\right)$.
\end{itemize}
Then the system $([0,1],f)$ will be called a \emph{cut-and-paste model} for $(G,g)$.
\end{definition}

\begin{lemma}\label{lem:exists-cap}
If $G$ is a tame graph and $g:G\to G$ is a continuous mixing map with a countable Markov partition, then there exists a cut-and-paste model for $(G,g)$.
\end{lemma}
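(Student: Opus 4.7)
The plan is to construct $\psi$ and $f$ by direct combinatorial bookkeeping, treating the partition $\P$ essentially as an abstract index set. The mixing hypothesis will not actually be used in the construction; it is inherited from the definition because subsequent results rely on it.

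First I would enumerate $\P = \{i_1, i_2, i_3, \ldots\}$ (the enumeration is finite or countably infinite since $\P$ is a countable family) and choose pairwise disjoint open subintervals $(a_k, b_k) \subset [0,1]$ whose lengths sum to $1$ and whose closures cover $[0,1]$: for instance, in the infinite case take $a_1 = 0$, $b_k - a_k = 2^{-k}$, and $a_{k+1} = b_k$. Then the union $\bigcup_k (a_k, b_k)$ is dense in $[0,1]$ and its complement is the countable set $\{a_k\} \cup \{b_k\}$. Because each $i_k$ is a free arc, its relative interior $i_k^\circ$ is homeomorphic to an open interval, so I may fix a homeomorphism $\psi_k : i_k^\circ \to (a_k, b_k)$ (either orientation will do). Defining $\psi$ on $\bigcup_k i_k^\circ$ by $\psi|_{i_k^\circ} = \psi_k$, one immediately obtains the first three bullets of Definition~\ref{def:cutandpaste}: the images $\psi(i_k^\circ) = (a_k, b_k)$ are pairwise disjoint, their union has countable complement in $[0,1]$, and $\psi$ restricts to a homeomorphism on each $i_k^\circ$.

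To finish, I would fix any basepoint $x_0 \in [0,1] \setminus \psi(\bigcup_k i_k^\circ)$ (this set is nonempty; e.g.\ $x_0 = 0$) and define $f : [0,1] \to [0,1]$ by
\begin{equation*}
f(x) = \begin{cases} \psi(g(\psi^{-1}(x))), & \text{if } x \in \psi(\bigcup_k i_k^\circ) \text{ and } g(\psi^{-1}(x)) \in \bigcup_k i_k^\circ, \\ x_0, & \text{otherwise.} \end{cases}
\end{equation*}
On the first branch $f(x)$ equals $(\psi\circ g\circ\psi^{-1})(x)$, and this is precisely the set of $x$ on which that composition is defined; on the second branch $f(x) = x_0$, which lies in the countable complement of $\psi(\bigcup_k i_k^\circ)$, as required by the fourth bullet of the definition.

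There is no real obstacle: everything reduces to three facts, namely that $\P$ is countable, that each $i_k^\circ$ is homeomorphic to $(0,1)$ because $i_k$ is a free arc, and that a countable disjoint union of open intervals of prescribed lengths fits inside $[0,1]$. The mildly delicate point is recognizing that the fourth bullet does not require $f$ to be a conjugate of $g$ in any strong sense; it only requires that $f$ agree with the formal composition where that composition makes sense, and land somewhere \emph{outside} the image of $\psi$ on the exceptional countable set where it does not, so sending those points to the fixed basepoint $x_0$ suffices.
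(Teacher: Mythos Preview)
Your proof is correct and follows essentially the same approach as the paper: enumerate the partition arcs, send each $i_k^\circ$ homeomorphically onto a dyadic subinterval of $[0,1]$ (the paper uses $(2^{-k},2^{-k+1})$, you use $(a_k,b_k)$ with $b_k-a_k=2^{-k}$), and define $f$ to agree with $\psi\circ g\circ\psi^{-1}$ where that composition is defined and to take a fixed value in the countable complement (the paper uses $0$, you use a generic $x_0$) otherwise. One tiny imprecision: in your concrete choice the point $1$ lies in the complement of $\bigcup_k(a_k,b_k)$ but is not literally one of the $a_k$ or $b_k$, so the complement is $\{a_k\}\cup\{b_k\}\cup\{1\}$---still countable, so nothing is affected.
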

\begin{proof}
Let $\P$ denote the countable Markov partition. It may be finite or countably infinite; we give the proof for the countably infinite case. Choose an enumeration $i_1, i_2, i_3, \ldots$ of the partition arcs. Define $\psi$ on $i_n^\circ$ to be any homeomorphism of $i_n^\circ$ onto $(2^{-n}, 2^{-n+1})$. Finally, define $f$ by
\begin{equation*}
f(y)=\begin{cases}
0, &\text{if }y=2^{-n} \text{ for some } n\geq0,\\
0, &\text{if }g(\psi^{-1}(y)) \text{ is not in the interior of any partition arc},\\
\psi(g(\psi^{-1}(y))), & \text{otherwise}.
\end{cases}
\end{equation*}
Then $([0,1],f)$ is a cut-and-paste model for $(G,g)$.
\end{proof}

\begin{theorem}\label{th:iso-cap}
If $([0,1],f)$ is a cut-and-paste model for $(G,g)$, then they are isomorphic modulo countable invariant sets.
\end{theorem}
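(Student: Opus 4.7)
The plan is to use the map $\psi$ from Definition~\ref{def:cutandpaste} as the candidate bi-measurable conjugacy, after excising suitable countable totally invariant sets $N_1\subset G$ and $N_2\subset[0,1]$. The natural starting points are $A_0=G\setminus\bigcup_{i\in\P}i^\circ$ (countable because $\P$ is a tame partition and so covers $G$ up to a countable set) and $B_0=[0,1]\setminus\psi(\bigcup_{i\in\P}i^\circ)$ (countable by the first bullet of Definition~\ref{def:cutandpaste}). Note that $\psi$ is already an injection from $G\setminus A_0$ onto $[0,1]\setminus B_0$; that for $x\in G\setminus A_0$ with $g(x)\notin A_0$ the conjugacy $f(\psi(x))=\psi(g(x))$ holds by construction; and that, directly from Definition~\ref{def:cutandpaste}, $f(B_0)\subseteq B_0$.

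Next I would enlarge $A_0$ to its grand $g$-orbit $N_1=\bigcup_{m,n\geq 0}g^{-n}(g^m(A_0))$. Because each $g|_i$ is a homeomorphism and $\P$ is at most countable, $g$ is at most countable-to-one on $G$; together with countability of $A_0$ this makes $N_1$ countable, and $N_1$ is totally $g$-invariant by construction. On the interval side, rather than the naive grand orbit of $B_0$, I would let the conjugacy dictate the choice and set $N_2=B_0\cup\psi(N_1\cap\bigcup_{i\in\P}i^\circ)$, which is countable as a union of two countable sets.

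The heart of the argument is verifying that $N_2$ is totally $f$-invariant. For forward invariance: if $y\in B_0$ then $f(y)\in B_0\subseteq N_2$ by Definition~\ref{def:cutandpaste}; if $y=\psi(x)$ with $x\in N_1\cap\bigcup_i i^\circ$, then either $g(x)\in A_0$, in which case the composition $\psi\circ g\circ\psi^{-1}$ is not defined at $y$ and so $f(y)\in B_0$, or $g(x)\in\bigcup_i i^\circ$, in which case $g(x)\in N_1$ by forward invariance of $N_1$ and therefore $f(y)=\psi(g(x))\in\psi(N_1\cap\bigcup_i i^\circ)\subseteq N_2$. For backward invariance, I would show the contrapositive $y\notin N_2\Rightarrow f(y)\notin N_2$: such $y$ satisfies $y=\psi(x)$ for some $x\in\bigcup_i i^\circ\setminus N_1$, and since $G\setminus N_1$ is forward invariant we get $g(x)\in\bigcup_i i^\circ\setminus N_1$, whence $f(y)=\psi(g(x))\in\psi((\bigcup_i i^\circ)\setminus N_1)$, which is disjoint from $N_2$ (using injectivity of $\psi$ on $\bigcup_i i^\circ$).

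With these sets in hand, the restriction $\psi:G\setminus N_1\to[0,1]\setminus N_2$ is a bijection: the excluded sets match up because $\psi(N_1\cap\bigcup_i i^\circ)=N_2\setminus B_0$ and $A_0\subseteq N_1$. It intertwines $g$ and $f$ by the remark in the first paragraph, and it is Borel bi-measurable because it is a piecewise homeomorphism on the countable Borel partition $\{i^\circ\setminus N_1\}_{i\in\P}$ of $G\setminus N_1$ (with image partition $\{\psi(i^\circ)\setminus N_2\}_{i\in\P}$ of $[0,1]\setminus N_2$). The main delicate point is the verification of total invariance of $N_2$; the key insight is that $N_2$ must be defined via $N_1$ and the conjugacy rather than as the $f$-grand-orbit of $B_0$, so that both directions of invariance can be extracted from the corresponding properties of $N_1$ combined with the one-sided information $f(B_0)\subseteq B_0$.
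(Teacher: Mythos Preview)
Your proof is correct and follows essentially the same strategy as the paper: excise countable totally invariant sets on both sides and use $\psi$ as the isomorphism. Two minor differences are worth noting. First, the paper takes $N_1=\bigcup_{n\geq0}g^{-n}(A_0)$ rather than the full grand orbit, invoking an external lemma (from~\cite{BBPRV}) that $A_0$ is already $g$-forward-invariant; your use of the grand orbit sidesteps this citation and is in that sense more self-contained. Second, your closing remark that $N_2$ \emph{must} be defined via $N_1$ and the conjugacy is not accurate: the paper simply sets $N_2=\bigcup_{n\geq0}f^{-n}(B_0)$, and since $f(B_0)\subseteq B_0$ (a fact you yourself observed), this set is already totally invariant and the verification that $\psi$ restricts to a bijection $G\setminus N_1\to[0,1]\setminus N_2$ goes through directly. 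So the ``naive'' symmetric definition works perfectly well; your asymmetric definition is a valid alternative, not a necessity.
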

\begin{proof}
Let $\P$, $\psi$ be as in Definition~\ref{def:cutandpaste}. Let $U_1$ denote the open set $\bigcup_{i\in\P}i^\circ$ and $C_1$ its complement in $G$. Let $N_1=\bigcup_{n=0}^\infty g^{-n}(C_1)$. Now $C_1$ is countable by the definition of a tame partition and is forward-invariant under $g$ by~\cite[Lemma 2.5(v)]{BBPRV}. Since $g$ is an at most countable-to-one map, $N_1$ is countable and totally invariant. Similarly, let $U_2$ denote the open set $\bigcup_{i\in\P} \psi(i^\circ)=\psi(U_1)$ and let $C_2$ denote its complement in $[0,1]$. Let $N_2=\bigcup_{n=0}^\infty f^{-n}(C_2)$. Applying Definition~\ref{def:cutandpaste}, we see that $f$ is also at most countable-to-one, and that $N_2$ is also countable and totally invariant.

Consider the restricted map $\psi: G\setminus N_1 \to [0,1]\setminus N_2$. We wish to show that $\psi$ is well-defined, bijective, and bimeasurable. Then the identity $\psi\circ g = f\circ\psi$ clearly follows, so that $\psi$ is the desired isomorphism.

\emph{(Well-defined):} Let $x\in G\setminus N_1$. We must show that $\psi(x)\not\in N_2$. We have $g^n(x)\in U_1$ for all $n\geq0$. It follows inductively that $f^n(\psi(x))=\psi(g^n(x))\in U_2$ for all $n\geq0$.

\emph{(Bijective):} Injectivity is free from Definition~\ref{def:cutandpaste}. To prove surjectivity, choose $y\in[0,1]\setminus N_2$. Then $f^n(y)\in U_2$ for each $n\geq0$, so that $\psi^{-1}$ is defined at each point along the forward orbit of $y$. It follows inductively that $f^n(y)=\psi\circ g^n\circ\psi^{-1}(y)$ for all $n\geq0$. Therefore $g^n(\psi^{-1}(y))\in U_1$ for all $n\geq0$, that is, $\psi^{-1}(y)\in G\setminus N_1$.

\emph{(Bimeasurable):} In fact, we show that both $\psi, \psi^{-1}$ are continuous at every point where they are defined. For if $y=\psi(x)$, then there is $i\in\P$ with $x\in i^\circ$. Then $i^\circ$ is an open neighbourhood of $x$ in $G$, $\psi(i^\circ)$ is an open neighbourhood of $y$ in $[0,1]$, and $\psi$ gives a homeomorphism between these two neighbourhoods.
\end{proof}

Given a tame graph $G$ and a continuous, mixing map $g:G\to G$ with a countable Markov partition $\P$, we may define the \emph{transition matrix} $M=M(g,\P)=(m_{ij})_{i,j\in\P}$ by the rule
\begin{equation*}
m_{ij}=\begin{cases}
1, &\text{if }g(i)\supset j,\\
0, &\text{if }g(i)\cap j^\circ=\emptyset.
\end{cases}
\end{equation*}

\begin{theorem}
Let $G$ be a tame graph, $g:G\to G$ a continuous mixing map with countable Markov partition $\P$, and let $M$ be the associated transition matrix. Then $(G,g)$ is isomorphic modulo countable invariant sets to the shift space $(\Sigma_M,\sigma)$.
\end{theorem}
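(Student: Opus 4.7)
The plan is to chain together the two isomorphism theorems already established in this section. By Lemma~\ref{lem:exists-cap} I first fix a cut-and-paste model $([0,1],f)$ for $(G,g)$, and Theorem~\ref{th:iso-cap} immediately yields an isomorphism modulo countable invariant sets between $(G,g)$ and $([0,1],f)$. It then suffices to produce a similar isomorphism between $([0,1],f)$ and $(\Sigma_M,\sigma)$ and to compose the two.

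To invoke Theorem~\ref{th:cmm}, I need to verify that the cut-and-paste model $f$ is countably Markov and mixing with transition matrix equal to $M$. As the partition $\X$ of $[0,1]$ I take the open intervals $\psi(i^\circ)$, $i\in\P$, together with the singletons of the countable set $[0,1]\setminus\psi(\bigcup_{i\in\P}i^\circ)$. Each restriction $f|_{\psi(i^\circ)}=\psi\circ g|_{i^\circ}\circ\psi^{-1}$ is a composition of homeomorphisms, hence continuous and strictly monotone. The Markov condition $f(\psi(i^\circ))\supset\psi(j^\circ)$ or $f(\psi(i^\circ))\cap\psi(j^\circ)=\emptyset$ translates directly from $g(i)\supset j$ or $g(i)\cap j^\circ=\emptyset$; this identification is exactly what makes the transition matrix of $f$ coincide with $M$. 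Forward invariance of the partition-point set under $f$ follows from the fallback clause in Definition~\ref{def:cutandpaste}.

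The main obstacle is verifying mixing for $f$. Given nonempty open $U,V\subset[0,1]$, the intersections $U\cap\psi(\bigcup_{i\in\P} i^\circ)$ and $V\cap\psi(\bigcup_{i\in\P} i^\circ)$ are nonempty and open, and $\psi^{-1}$ carries them to nonempty open subsets $U',V'\subset G$. Mixing of $g$ supplies $N$ such that $g^n(U')\cap V'\neq\emptyset$ for all $n\geq N$. The subtlety is that the identity $f^n(\psi(y))=\psi(g^n(y))$ requires $g^k(y)\in\bigcup_{i\in\P} i^\circ$ for every $0\leq k\leq n$. The exceptional set $\bigcup_{k=0}^n g^{-k}(G\setminus\bigcup_{i\in\P} i^\circ)$ is countable, since $G\setminus\bigcup_{i\in\P} i^\circ$ is countable by the definition of a tame partition and $g$ is at most countable-to-one (each partition arc contributes at most one preimage of a given point, and $\P$ is countable). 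Because $G$ is a nondegenerate Peano continuum, every nonempty open subset of $G$ is uncountable, so one may pick $y\in U'\cap g^{-n}(V')$ outside the exceptional countable set, which gives $f^n(\psi(y))\in V$ and establishes mixing.

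With $f$ countably Markov and mixing, Theorem~\ref{th:cmm} produces a Borel isomorphism modulo countable invariant sets between $([0,1],f)$ and $(\Sigma_M,\sigma)$. Composing with the isomorphism from Theorem~\ref{th:iso-cap} gives the desired isomorphism between $(G,g)$ and $(\Sigma_M,\sigma)$: on each side one enlarges the deleted set by the preimage under the corresponding bijection of the deleted set coming from the other isomorphism; these enlargements remain countable (the bijections are bimeasurable) and totally invariant (preimages of totally invariant sets are totally invariant, and finite unions of such sets are such sets), so the composition of the two bimeasurable bijections yields the required map.
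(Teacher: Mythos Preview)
Your overall strategy---chain the cut-and-paste isomorphism with Theorem~\ref{th:cmm}---matches the paper, but your choice of partition for $f$ creates a genuine gap. You claim that $f|_{\psi(i^\circ)}=\psi\circ g|_{i^\circ}\circ\psi^{-1}$ is ``a composition of homeomorphisms, hence continuous and strictly monotone.'' This is false: $\psi$ is only a homeomorphism on each individual $j^\circ$, not on the whole image $g(i^\circ)$. When $g(i)$ covers two or more partition arcs $j_1,j_2,\ldots$ (which is the generic situation for a mixing map), the arcs $j_k$ are adjacent in $G$ but their images $\psi(j_k^\circ)$ are typically scattered in $[0,1]$; consequently $f$ jumps discontinuously on $\psi(i^\circ)$ at each point $\psi(x_0)$ with $g(x_0)$ an endpoint of some $j_k$. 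Thus $\X=\{\psi(i^\circ):i\in\P\}$ does \emph{not} satisfy the hypothesis ``$f|_i$ is continuous and strictly monotone'' of Section~\ref{LIM}, and Theorem~\ref{th:cmm} cannot be applied.

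The paper repairs this by passing to the refined partition $\I=\{\psi(i^\circ\cap g^{-1}(j^\circ)):i,j\in\P\}$, on whose pieces $f$ really is $\psi|_{j^\circ}\circ g\circ(\psi|_{i^\circ})^{-1}$, a genuine composition of homeomorphisms. The price is that the transition matrix for $f$ with respect to this refined partition is $M(g,\P\vee g^{-1}(\P))$ rather than $M$; the paper then observes that $(\Sigma_{M(g,\P\vee g^{-1}(\P))},\sigma)$ is the $2$-block presentation of $(\Sigma_M,\sigma)$, hence topologically conjugate to it, and composes once more. Your argument can be salvaged by making exactly this refinement, but as written it does not go through.
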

\begin{proof}
Consider the cut-and-paste model $([0,1], f)$ and the map $\psi$ constructed in Lemma \ref{lem:exists-cap}. Then $f$ is a countably Markov and mixing interval map (in the sense of Section~\ref{LIM}) with respect to the partition intervals $\I=\{\psi(i^\circ \cap g^{-1}(j^\circ));~i,j\in\P\}$. The transition matrix for $f$ is the same as the transition matrix for $g$ with respect to the refined tame partition $\P\vee g^{-1}(\P) = \{i\cap g^{-1}(j);~i,j\in\P\}$, which is again a countable Markov partition for $g$~\cite[Lemma 2.5]{BBPRV}. Combining Theorems~\ref{th:cmm} and \ref{th:iso-cap} we see that $(G,g)$ is isomorphic modulo countable invariant sets with $(\Sigma_{M(g,\P\vee g^{-1}(\P))}, \sigma)$. But this system is in turn topologically conjugate to $(\Sigma_M,\sigma)$, being nothing more than its higher block presentation with blocks of length 2 (see \cite[Section 1.4]{K}).
\end{proof}

\begin{example} Figure~\ref{fig:dendrite} illustrates a countably Markov and mixing map $g$ on a dendrite $G$ sometimes called the star or the locally connected fan. $G$ is the union in $\mathbb{R}^2$ of countably many line segments $A_i, i=1,2, \ldots$ called blades. Each blade has one endpoint at the origin; the other endpoint is called the tip of the blade. No blade contains any other and the lengths of the blades converge to zero. Each blade is subdivided into arcs at countably many points converging to the tip of the blade -- this defines the Markov partition $\P$. The map $g$ fixes the origin. If we denote the sequence of points subdividing $A_i$ as $(x^i_n)_{n=0}^\infty$, ordering them along $A_i$ from the origin to the tip, then $g$ maps $x^i_{2n}$ to the origin and $x^i_{2n+1}$ to the tip of blade $A_{\max(1,i-1)+n}$ for all $n$, and is piecewise affine between these partition points. In the left part of the figure, the label next to a pair of partition arcs indicates which blade those arcs will be mapped onto; in the right part of the figure, the labels name the blades. Continuity of $g$ is clear. The topological mixing property is not clear, but can be ensured by an appropriate choice of the lengths of the blades and of the subarcs into which they are partitioned -- we omit the calculations. It is also fairly easy to show that $g$ has no totally invariant periodic orbits, and thus no atomic fair measures.

\begin{figure}[htb!!]
\includegraphics[width=.8\textwidth]{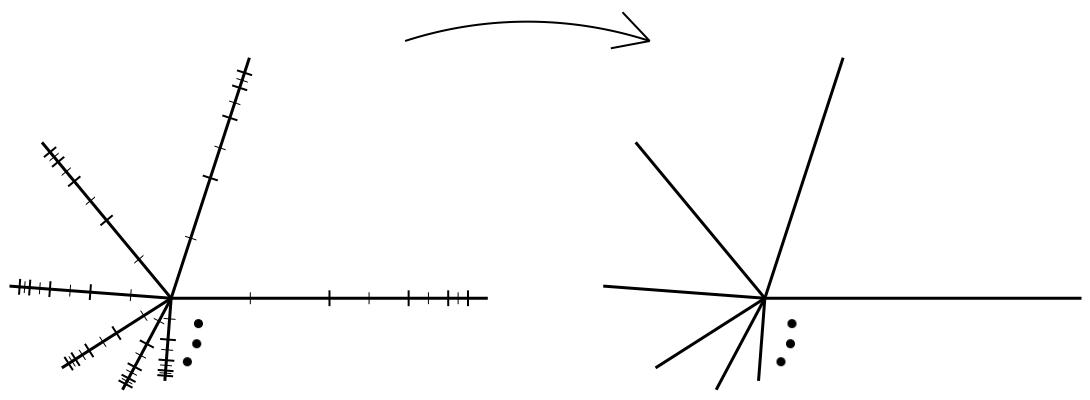}
\begin{picture}(0,0)
\put(-276,23){\tiny $A_1$}
\put(-239,23){\tiny $A_2$}
\put(-217,23){\tiny $\cdots$}
\put(-287,48){\tiny $A_1$}
\put(-278,76){\tiny $A_2$}
\put(-271,90){\rotatebox{73}{\tiny $\cdots$}}
\put(-307,52){\rotatebox{-45}{\tiny $A_2$}}
\put(-321,70){\rotatebox{-45}{\tiny $A_3$}}
\put(-330,82){\rotatebox{-45}{\tiny $\cdot\!\cdot\!\cdot$}}
\put(-315,38){\rotatebox{-5}{\tiny $A_3$}}
\put(-333,40){\rotatebox{-5}{\tiny $A_4$}}
\put(-344,41){\rotatebox{-5}{\tiny $\cdot\!\cdot\!\cdot$}}
\put(-4,29){\small $A_1$}
\put(-84,113){\small $A_2$}
\put(-157,87){\small $A_3$}
\put(-176,36){\small $A_4$}
\put(-156,4){\small $A_5$}
\put(-132,-6){\small $A_6$}
\put(-115,-4){\small $A_7$}
\put(-185,125){\small $g$}
\end{picture}
\caption{A dendrite map on the star dendrite $G$.}
\label{fig:dendrite}
\end{figure}

\begin{figure}[htb!!]
\includegraphics[width=.3\textwidth]{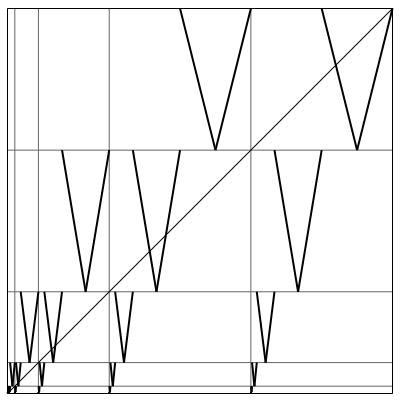}
\begin{picture}(0,0)
\put(-35,-10){\small $A_1$}
\put(-85,-10){\small $A_2$}
\put(-116,-10){\small $A_3$}
\put(-130,-10){\small $\cdot\!\cdot\!\cdot$}
\put(-148,100){\small $A_1$}
\put(-148,55){\small $A_2$}
\put(-148,22){\small $A_3$}
\put(-145,4){\rotatebox{90}{\small $\cdot\!\cdot\!\cdot$}}
\end{picture}
\caption{A cut-and-paste model for $(G,g)$.}
\label{fig:cutandpaste}
\end{figure}

A cut-and-paste model $([0,1],f)$ for $(G,g)$ is shown in Figure~\ref{fig:cutandpaste} -- the labels show how the blades of $G$ have been placed within the interval $[0,1]$. Notice that the dendrite map and the interval map have the same symbolic dynamics. Moreover, the interval map $f$ is Lebesgue fair -- it is the same as the Lebesgue fair model from Example~\ref{ex:BruinTodd}, but with each affine piece of the graph replaced by two pieces with twice the slope. We conclude that $(G,g)$ has a unique fair measure. Moreover, we may calculate the fair entropy via the Rohlin formula as $\int_0^1 \log|f'(x)| dx \approx \log(2.85053)+\log(2)$ -- this just adds $\log(2)$ to the fair entropy from Example~\ref{ex:BruinTodd}, which makes sense heuristically, since each point has twice as many preimages.

\end{example}


\end{document}